  \newcommand\figcaption{\def\@captype{figure}\caption}
  \newcommand\tabcaption{\def\@captype{table}\caption}
\newcommand{\p}{\partial}
\numberwithin{equation}{section}
\newtheorem{theorem}{Theorem}[section]
\newtheorem{proposition}{Proposition }[section]
\newtheorem{remark}{Remark}[section]
\begin{document}

\title[Finite element methods for the stochastic mean curvature flow]{Finite 
element approximations of the stochastic mean curvature flow of planar 
curves of graphs}

\author{Xiaobing Feng}
\address{Department of Mathematics \\
         The University of Tennessee \\
         Knoxville, TN 37996, U.S.A.}
\email{xfeng@math.utk.edu}

\author{Yukun Li}
\address{Department of Mathematics \\
         The University of Tennessee \\
         Knoxville, TN 37996, U.S.A.}
\email{yli@math.utk.edu}
\thanks{The work of the first two authors was partially supported by 
the NSF grant DMS-1016173.}

\author{Andreas Prohl}
\address{Mathematisches Institut, Universit\"at T\"ubingen, Auf der 
Morgenstelle 10, D-72076, T\"ubingen, Germany}
\email{ prohl@na.uni-tuebingen.de.}

\keywords{
Stochastic mean curvature flow,  level set method, finite element method, 
error analysis.
}

\subjclass{
65M12, 
65M15, 
65M60, 
}

\begin{abstract}
This paper develops and analyzes a semi-discrete and  a fully discrete 
finite element method for a one-dimensional quasilinear parabolic stochastic 
partial differential equation (SPDE) which describes the stochastic 
mean curvature flow for planar curves of graphs. To circumvent the 
difficulty caused by the low spatial regularity of the SPDE solution,  
a regularization procedure is first proposed to approximate 
the SPDE, and an error estimate for the regularized problem is derived.  
A semi-discrete finite element method, and a space-time fully discrete 
method are then proposed to approximate the solution of the regularized 
SPDE problem.  Strong convergence with rates are established for both, 
semi- and fully discrete methods. Computational experiments are provided to study 
the interplay of the geometric evolution and gradient type-noises.
\end{abstract}

\maketitle
\section{Introduction}\label{sec-1}
The mean curvature flow (MCF) refers to a one-parameter family 
of hypersurfaces $\{\Gamma_t\}_{t\geq 0} \subset \mathbf{R}^{d+1}$ 
which starts from a given initial surface $\Gamma_0$ and evolves 
according to the geometric law
\[
V_n(t, \cdot) = H(t, \cdot),
\]
where $V_n(t, \cdot)$ and $H(t,\cdot)$ denote respectively the 
normal velocity and the mean curvature of the hypersurface 
$\Gamma_t$ at time $t$. The MCF is the best known curvature-driven 
geometric flow which finds many applications in differential geometry,  
geometric measure theory, image processing and materials science and 
have been extensively studied both analytically and numerically 
(cf. \cite{Deckelnick_Dziuk_Elliott05, Giga06,Osher_Fedkiw03,
Sethian99, Zhu02} and the references therein).

As a geometric problem, the MCF can be described using different 
formulations. Among them,  we mention the classical parametric 
formulation \cite{Huisken84},  Brakke's varifold formulation 
\cite{Brakke78}, De Giorgi's barrier function formulation 
\cite{DeGiorgi94, Bellettini_Novaga97, Bellettini_Paolini95}, 
the variational formulation \cite{Almgren_Taylor_Wang93}, the level 
set formulation \cite{Osher_Sethian88, Evans_Spruck91, Chen_Giga_Goto89}, 
and the phase field formulation \cite{Evans_Soner_Souganidis92, Ilmanen93}.  
We remark that different formulations often lead to different solution 
concepts and also lead to developing different analytical (and numerical) 
concepts and techniques to analyze and approximate the MCF.  However,  
all these formulations of the MCF give rise to difficult but 
interesting nonlinear geometric partial differential equations (PDEs),
and the resolution of the MCF then depends on the solutions of 
these nonlinear geometric PDEs.
One interesting feature of the MCF is the development of singularities, 
in particular singularities which may occur in finite time, even when 
the initial hypersurface is smooth.  The singularities may appear in 
different  forms such as self-intersection, pinch-off,  merging, and 
fattening.  To understand and characterize these singularities have been 
the focus of the analytical and numerical research on the MCF 
(cf.~\cite{Chen_Giga_Goto89,Deckelnick_Dziuk_Elliott05,Evans_Spruck91, 
Feng_Prohl03, Osher_Fedkiw03, Sethian99, Zhu02}, and the references therein).

For application problems,  there is a great deal of interest to 
include stochastic effects, and to study the impact of special noises 
on  regularities of solutions, as well as their long-time behaviors.  
The uncertainty may arise from various sources such as thermal fluctuation, 
impurities of the materials, and the intrinsic instabilities of the 
deterministic evolutions.  In this paper we consider the following 
form of a stochastically perturbed mean curvature flow: 
\begin{equation}\label{e1.1}
V_n= H(t,\cdot) + \epsilon \dot{W}_t,
\end{equation}
where $\dot{W}$ denotes a white in time noise, and $\epsilon>0$ is  
a constant.  It is easy to check that (cf. \cite{Yip98,Sarhir_Renesse}) 
the level set formulation of \eqref{e1.1} is given by the following 
nonlinear parabolic stochastic partial differential equation (SPDE):
\begin{equation} \label{e1.2}
df=|\nabla_{x'} f|\,  \mbox{div}_{x'} \Bigl( \frac{\nabla_{x'} f}{|\nabla_{x'} f|} 
\Bigr)\, dt + \epsilon |\nabla_{x'} f| \circ dW_t,
\end{equation}
where $f=f(x',t)$ with $x'=(x, x_{d+1})$ denotes the level set function 
so that $\Gamma_t$ is represented by the zero level set of $f$, 
and  `$\circ$' refers to the Stratonovich interpretation of the 
stochastic integral. Again, stochastic effects are modeled by a standard 
${\mathbb R}$-valued Wiener process $W \equiv \{ W_t;\, t \geq 0\}$ 
which is defined on a given filtered probability space 
$(\Omega, {\mathcal F}, \{ {\mathcal F}_t;\, t \geq 0\}, {\mathbb P})$.

In the case that  $f$ is a $d$-dimensional graph, that is, 
$f(x',t)=x_{d+1}- u(x,t)$, equation \eqref{e1.2} reduces to 
\begin{equation} \label{e1.2a}
du=\sqrt{1+|\nabla_x u|^2}\,  \mbox{div}_{x} 
\Bigl( \frac{\nabla_x u}{\sqrt{1+|\nabla_x u|^2}} \Bigr)\, dt
+ \epsilon \sqrt{1+|\nabla_x u|^2}  \circ dW_t.
\end{equation}
To the best of our knowledge,  a comprehensive PDE theory for the 
SPDE \eqref{e1.2a}  is still missing in the literature. For the 
case $d=1$,   \eqref{e1.2a} reduces to the following one-dimensional 
nonlinear parabolic SPDE: 
\begin{align} \label{e1.3a}
du &= \frac{\p^2_x u}{\sqrt{1+|\p_x u|^2} } dt   
+ \epsilon \sqrt{1+|\p_x u|^2}\circ dW_t \\ \nonumber 
&= \partial_x \bigl( {\rm arctan} (\partial_x u)\bigr) dt 
+ \epsilon \sqrt{1+|\p_x u|^2} \circ dW_t.
\end{align}
Here $\p_x u$ stands for the derivative of $u$ with respect to $x$. This 
Stratonovich SPDE can be equivalently converted into the following It\^o SPDE:
\begin{align} \label{e1.3}
du &= \Bigl[ \frac{\epsilon^2}2 \p^2_x u +\bigl( 1-\frac{\epsilon^2}2\bigr)  
\frac{\p_x^2 u}{\sqrt{1+|\p_x u|^2} } \Bigr] dt  
+ \epsilon \sqrt{1+|\p_x u|^2}\,  dW_t \\ \nonumber
&= \partial_x \Bigl(  \frac{\varepsilon^2}{2} \partial_x u 
+ (1- \frac{\varepsilon^2}{2}){\rm arctan} (\partial_x u)\Bigr) dt
 + \epsilon \sqrt{1+|\p_x u|^2}\,  dW_t.
\end{align}

As is evident from (\ref{e1.3a}), (\ref{e1.3}), the stochastic 
mean curvature flow (\ref{e1.2a}) for $d=1$ may be interpreted
as a gradient flow with multiplicative noise.  Recently,  Es-Sarhir 
and von Renesse \cite{Sarhir_Renesse} proved existence and uniqueness  
of  (stochastically) strong solutions  for \eqref{e1.3a}  by a variational 
method, based on the Lyapunov structure of the problem 
(cf.~\cite[property (H3)]{Sarhir_Renesse}) which replaces the  standard  
coercivity assumption (cf.~\cite[property (A)]{Sarhir_Renesse}).
As is pointed out in \cite{Sarhir_Renesse}, mild solutions for 
\eqref{e1.3a} may not be expected due to its quasilinear character.

The primary goal of this paper is to develop and analyze by a variational 
method some semi-discrete and fully discrete finite element methods for 
approximating (with rates) the strong solution of the It\^o 
form \eqref{e1.3} of the stochastic MCF. The error 
analysis presented in this paper differs from most existing works on the  
numerical analysis of SPDEs, where mild solutions are mostly 
approximated with the help of corresponding discrete 
semi-groups (see ~\cite{KLM1} and the references therein).  
We also note that the error estimates derived in \cite{Gyongy1} 
which hold for general quasilinear SPDEs do not apply to (\ref{e1.3}) 
because the structural assumptions, such as the coercivity 
assumption \cite[cf.~Assumption 2.1, (ii)]{Gyongy1} and the
strong monotonicity assumption \cite[cf.~Assumption 2.2, (i)]{Gyongy1}
fail to hold for \eqref{e1.3}, and also the regularity assumptions 
\cite[cf.~Assumption 2.3]{Gyongy1} are not known to hold in the 
present case.  In this paper, we use a variational approach similar 
to \cite{Gyongy1,BCP1,CP1} to analyze the convergence of our finite 
element methods. One main difficulty for approximating the strong 
solution of \eqref{e1.3} with certain rates is caused by the low regularity
 of the solution.  To circumvent this difficulty,  we first regularize 
the SPDE \eqref{e1.3} by adding an additional linear diffusion
term  $\delta \p_x^2 u$ to the drift coefficient of (\ref{e1.3}); 
as a consequence the related drift operator in \eqref{e2.4} becomes
strongly monotone, and the corresponding solution process
$u^\delta$ is then $H^2$-valued in space. However, it is due to the
`gradient-type' noise that a relevant H\"older estimate in the $H^1$-norm for
the solution $u^\delta$ seems not available, which is necessary to properly control
time-discretization errors.      
In order to circumvent this problematic issue, we proceed first with the
spatial discretization (\ref{eq3.1})-(\ref{eq3.1x}); we may then use an
inverse finite element estimate, and the weaker H\"older estimate
(\ref{prop3.7b}) for the process $u^\delta_h$ to control time-discretization
errors. We remark that addressing space discretization errors
first requires to efficiently cope with the limited regularity of Lagrange
finite element functions in the context of required higher norm estimates,
which is overcome by a perturbation argument (cf.~Proposition~\ref{prop3.7}).

The remainder of this paper consists of three additional sections.  
In section \ref{sec-2} we first recall some relevant facts about the 
solution of \eqref{e1.3} from \cite{Sarhir_Renesse}; we then present 
an analysis for the regularized problem. The main result of this 
section is to prove an error bound for $u^\delta-u$ in powers of 
$\delta$. In section \ref{sec-3} we propose a semi-discrete (in space) 
and a fully discrete finite element method for the regularized 
equation \eqref{e2.4} of the SPDE \eqref{e1.3}. The main result of 
this section is the strong $L^2$-error estimate for the finite element 
solution.  Finally,  in section \ref{sec-4} we present several 
computational results to validate the theoretical error estimate,
and to study relative effects due to geometric evolution and gradient-type noises.

\section{Preliminaries and error estimates for a PDE regularization}
\label{sec-2}
The standard function and space notation will be adapted in this paper. 
For example,   $H^2(I)$ denotes the Sobolev space $W^{2,2}(I)$ on the 
interval $I=(0,1)$, and $H^0(I)=L^2(I)$.  Let $(\cdot, \cdot)_I$ denote 
the $L^2$-inner product on $I$. 
The triple $(\Omega, \mathcal{F}, {\mathbb P})$
stands for a given probability space. For a random variable $X$,  
we denote by $\mathbb{E}[X]$ the expected value of $X$.

We first quote the following existence and uniqueness result from 
\cite{Sarhir_Renesse} for the SPDE
 \eqref{e1.3} with periodic boundary conditions.

\begin{theorem} \label{thm2.1}
Suppose that $u_0\in H^1(I)$ and fix $T > 0$.  
Let $\epsilon \leq \sqrt{2}$. There exists a unique
strong solution to SPDE  \eqref{e1.3a} with periodic boundary conditions 
and attaining the initial condition $u(0)= u_0$,
that is, there exists a unique $H^1$-valued 
$\{ {\mathcal F}_t\}_{t \in [0,T]}$-adapted 
process $u \equiv \{ u(t);\, t \in [0,T]\}$
such that  ${\mathbb P}$-almost surely 
\begin{eqnarray}\label{e2.1}
\quad \bigl(u(t), \varphi \bigr)_I &=&\bigl( u_0,\varphi \bigr)_I
-\frac{\epsilon^2}2 \int_0^t  \bigl( \p_x u, \p_x \varphi \bigr)_I\, ds \\
&&
-\bigl( 1-\frac{\epsilon^2}2\bigr) \int_0^t \bigl( \arctan(\p_x u), 
\p_x \varphi \bigr)_I  \Bigr] ds \nonumber  \\ \nonumber
 &&
 + \epsilon \int_0^t  \Bigl( \sqrt{1+|\p_x u|^2}, \varphi \Bigr)_I   dW_s
 \qquad \forall \varphi\in H^1(I) \quad \forall t\in [0,T].
\end{eqnarray}
Moreover,  $u$ satisfies for some $C > 0$ independent of $T > 0$,
\begin{equation}\label{e2.2}
\sup_{t \in [0,T]} \mathbb{E}  \bigl[\|u(t)\|_{H^1(I)}^2\bigr] \leq C.
\end{equation}

\end{theorem}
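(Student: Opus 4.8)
The plan is to treat \eqref{e1.3} by the variational (monotone-operator) method for SPDEs, as in \cite{Sarhir_Renesse,Gyongy1}. First I would fix the Gelfand triple $V:=H^1(I)\hookrightarrow H:=L^2(I)\cong H^*\hookrightarrow V^*$, with periodicity encoded in $V$, and set $F(s):=\frac{\epsilon^2}{2}s+\bigl(1-\frac{\epsilon^2}{2}\bigr)\arctan(s)$. Then \eqref{e1.3} becomes the abstract It\^o equation $du=A(u)\,dt+B(u)\,dW$ with $\langle A(u),\varphi\rangle:=-\bigl(F(\p_x u),\p_x\varphi\bigr)_I$ and $B(u):=\epsilon\sqrt{1+|\p_x u|^2}\in H$; the weak form \eqref{e2.1} is exactly this equation integrated in time and tested against $\varphi\in V$, so a solution in the abstract sense is precisely the strong solution asserted in the theorem.

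The structural heart of the argument is a monotonicity estimate. For $u,v\in V$ I would bound
\[
2\langle A(u)-A(v),u-v\rangle+\|B(u)-B(v)\|_H^2
=-2\bigl(F(\p_x u)-F(\p_x v),\p_x(u-v)\bigr)_I+\epsilon^2\bigl\|\sqrt{1+|\p_x u|^2}-\sqrt{1+|\p_x v|^2}\bigr\|_H^2.
\]
Under the hypothesis $\epsilon\le\sqrt2$ one has $1-\frac{\epsilon^2}{2}\ge0$, hence $F'\ge\frac{\epsilon^2}{2}$, while $\sqrt{1+(\cdot)^2}$ is $1$-Lipschitz; the dissipative first term (of size $\ge\epsilon^2\|\p_x(u-v)\|_H^2$) therefore absorbs the noise correction (of size $\le\epsilon^2\|\p_x(u-v)\|_H^2$), giving the whole expression $\le0$. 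Applying It\^o's formula to $\|u_1-u_2\|_H^2$ for two solutions then yields pathwise uniqueness directly. A parallel computation of $2\langle A(u),u\rangle+\|B(u)\|_H^2$ shows why the classical theory does not close: the diffusion's coercive term $-\epsilon^2\|\p_x u\|_H^2$ is cancelled exactly by the It\^o correction $+\epsilon^2\|\p_x u\|_H^2$ of the gradient-type noise, leaving only the bounded $\arctan$ contribution, which controls merely $\|\p_x u\|_{L^1}$ and gives no $V$-coercivity. Following \cite{Sarhir_Renesse} I would replace coercivity by the Lyapunov structure $A=-\mathcal G'$, $\mathcal G(u)=\int_I H(\p_x u)\,dx$ with $H'=F$, noting that $\epsilon\le\sqrt2$ makes $\mathcal G$ convex and bounded below.

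For existence I would run a Galerkin scheme on the periodic eigenbasis of $-\p_x^2$; the truncated equations are finite-dimensional It\^o SDEs with locally Lipschitz coefficients, so they admit unique solutions, and It\^o's formula for $\mathcal G(u_n)$ furnishes bounds uniform in the truncation level. These bounds give weak-$*$ compactness of the approximations in the relevant Bochner spaces, and the monotonicity established above lets me identify the weak limit of $A(u_n)$ with $A(u)$ by the standard Minty--Browder device, producing a solution.

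Finally, for \eqref{e2.2} I would differentiate the equation and estimate the gradient: with $w:=\p_x u$, It\^o's formula applied to $\|w\|_H^2$ along the smooth Galerkin approximations gives
\[
d\|w\|_H^2=-2\bigl(F'(w)\p_x w,\p_x w\bigr)_I\,dt+\epsilon^2\Bigl\|\p_x\sqrt{1+w^2}\Bigr\|_H^2\,dt+dM,
\]
with $dM$ a mean-zero martingale. The pointwise identity $-2F'(s)+\epsilon^2\frac{s^2}{1+s^2}=-\frac{2}{1+s^2}\le0$ shows the dissipation exactly dominates the noise correction, so $\frac{d}{dt}\,\mathbb{E}\|w(t)\|_H^2\le0$ and hence $\sup_{t}\mathbb{E}\|\p_x u(t)\|_H^2\le\|\p_x u_0\|_H^2$ uniformly in $T$; this uniform gradient bound is the essential content of \eqref{e2.2}, and it passes to the limit by weak lower semicontinuity. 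I expect the principal obstacle to be the rigorous justification of these higher-norm It\^o computations when $u_0$ lies only in $H^1$ and $u$ need not be $H^2$-valued, which is exactly why they must be performed at the Galerkin level and transferred to the limit rather than carried out directly on the solution.
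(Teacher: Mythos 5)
Your proposal is correct and takes essentially the same route as the paper: the paper offers no proof of Theorem~\ref{thm2.1} but quotes it from Es-Sarhir and von Renesse, whose argument is precisely the variational/monotone-operator method you outline — Gelfand triple, weak monotonicity $2\langle A(u)-A(v),u-v\rangle+\|B(u)-B(v)\|_H^2\le 0$ from $F'\ge\epsilon^2/2$ versus the $1$-Lipschitz noise, Galerkin approximation with Minty--Browder identification, and the Lyapunov structure replacing the coercivity that is exactly cancelled by the It\^o correction of the gradient-type noise. Your Galerkin-level gradient estimate, including the pointwise cancellation identity $-2F'(s)+\epsilon^2\frac{s^2}{1+s^2}=-\frac{2}{1+s^2}$, is the same computation the paper itself carries out (with the extra $\delta\,\p_x^2 u^\delta$ term) in the proof of Theorem~\ref{thm2.2}, and your reading of \eqref{e2.2} as essentially a $T$-uniform bound on $\mathbb{E}\bigl[\|\p_x u(t)\|_{L^2(I)}^2\bigr]$ is consistent with the paper's use of that estimate.
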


It is not clear if such a regularity can be improved from the analysis
of  \cite{Sarhir_Renesse} because of the difficulty caused by the gradient-type 
noise. In particular,  $H^2$-regularity in space, which would be desirable 
in order to derive some rates of convergence for finite element methods, 
seems not clear. To overcome this difficulty, we introduce the following 
simple regularization of \eqref{e1.3}: 
\begin{equation} \label{e2.4}
du^\delta =\Bigl[  \bigl(\delta+\frac{\epsilon^2}2 \bigr) \p_x^2 u^\delta 
+\bigl( 1-\frac{\epsilon^2}2\bigr)
\frac{\p_x^2 u^\delta}{\sqrt{1+|\p_x u^\delta|^2} }  \Bigr] dt  
+ \epsilon \sqrt{1+|\p_x u^\delta|^2}\,  dW_t.
\end{equation}
To make this indirect approach successful, we need to address the 
well-posedness and regularity issues for \eqref{e2.4} and to estimate
the difference between the strong solutions $u^\delta$ of (\ref{e2.4}) 
and $u$ of (\ref{e1.3}).

\begin{theorem} \label{thm2.2}
Suppose that $u^\delta_0\in H^1(I)$ and 
$\|u^\delta_0\|_{H^1(I)}\leq C_0$, where $C_0 > 0$ is independent 
of $\delta$. Let $\epsilon \leq \sqrt{2(1+\delta)}$. Then there exists a unique 
strong solution to SPDE  \eqref{e2.4} with periodic boundary conditions 
and initial condition $u^\delta(0)= u^\delta_0$, that is, there exists a
unique $H^1$-valued $\{ {\mathcal F}_t\}_{t \in [0,T]}$-adapted 
process $u^\delta \equiv \{ u^\delta(t);\, t \in [0,T]\}$
such that  there holds ${\mathbb P}$-almost surely
\begin{align}\label{e2.5}
\quad \bigl(u^\delta(t), \varphi \bigr)_I 
&= \bigl( u^\delta_0,\varphi \bigr)_I 
-\bigl(\delta+\frac{\epsilon^2}2\bigr)
\int_0^t \bigl( \p_x u^\delta, \p_x \varphi \bigr)_I \, ds \\ \nonumber
 &\quad
-\bigl( 1-\frac{\epsilon^2}2\bigr) 
\int_0^t \bigl( \arctan(\p_x u^\delta), \p_x \varphi \bigr)_I  \,ds
  \nonumber  \\
 &\quad
+\epsilon \int_0^t  \Bigl( \sqrt{1+|\p_x u^\delta|^2}, \varphi \Bigr)_I dW_s
 \quad \forall \varphi\in H^1(I) \quad \forall\, t \in [0,T]. \nonumber
\end{align}
Moreover,  $u^\delta$ satisfies
\begin{equation}\label{e2.6}
\sup_{t \in [0,T]} \mathbb{E} \Bigl[\frac12\| \p_x u^\delta(t)\|_{L^2(I)}^2 \Bigr] 
+\delta\, \mathbb{E} \Bigl[\int_0^T  \|\p_x^2 u^\delta(s)\|_{L^2(I)}^2\, ds\Bigr] 
\leq \mathbb{E} \Bigl[\frac12 \| \p_x u^\delta_0\|_{L^2(I)}^2 \Bigr].
\end{equation}

\end{theorem}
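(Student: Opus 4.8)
The plan is to prove Theorem~\ref{thm2.2} by adapting the variational (Lyapunov-structure) argument of Es-Sarhir and von Renesse~\cite{Sarhir_Renesse} that underlies Theorem~\ref{thm2.1}. The crucial structural observation is that adding $\delta\,\partial_x^2 u^\delta$ to the drift does not destroy any of the monotonicity/Lyapunov properties used in~\cite{Sarhir_Renesse}; rather, it strengthens coercivity, since the extra term contributes $-\delta\|\partial_x u^\delta\|_{L^2}^2$ to the variational form and makes the drift operator strongly monotone in $H^1$. Thus the first step is to recast~\eqref{e2.4} in the variational framework of~\cite{Sarhir_Renesse}: one verifies that, under the sign constraint $\epsilon\le\sqrt{2(1+\delta)}$ (which guarantees the relevant coefficients keep the correct signs so the Lyapunov functional $\tfrac12\|\partial_x v\|_{L^2}^2$ remains a supersolution), the hypotheses (H1)--(H3) of~\cite{Sarhir_Renesse} continue to hold for the regularized operator. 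Existence and uniqueness of the $H^1$-valued adapted strong solution satisfying~\eqref{e2.5} then follow verbatim from their abstract theorem, with the bound~\eqref{e2.2} replaced by the $\delta$-dependent estimate we derive next.

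The second and main step is to establish the a priori estimate~\eqref{e2.6}, which is the quantitative heart of the statement and captures the $\delta$-weighted $H^2$ control that motivated the regularization. The natural tool is the It\^o formula applied to the Lyapunov functional $\Phi(v)=\tfrac12\|\partial_x v\|_{L^2(I)}^2$ along the solution process $u^\delta$. Computing $d\Phi(u^\delta)$ produces: a drift contribution from the two diffusion terms, namely $-(\delta+\tfrac{\epsilon^2}{2})\|\partial_x^2 u^\delta\|_{L^2}^2$ together with a term $-(1-\tfrac{\epsilon^2}{2})\bigl(\partial_x\arctan(\partial_x u^\delta),\partial_x^2 u^\delta\bigr)_I$; a martingale term from the noise; and an It\^o correction (quadratic-variation) term arising from the second derivative of $\Phi$ acting on the diffusion coefficient $\epsilon\sqrt{1+|\partial_x u^\delta|^2}$. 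After integrating by parts and taking expectations the martingale term vanishes, so the delicate point is to show that the remaining deterministic terms combine with the correct sign.

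The hard part, and the place where the constraint $\epsilon\le\sqrt{2(1+\delta)}$ is genuinely needed, will be balancing the It\^o correction against the negative drift. The correction term, after differentiating the noise coefficient and using that $\partial_x\sqrt{1+|\partial_x u^\delta|^2}=\tfrac{\partial_x u^\delta\,\partial_x^2 u^\delta}{\sqrt{1+|\partial_x u^\delta|^2}}$, yields a positive multiple of $\tfrac{\epsilon^2}{2}$ times a quantity bounded by $\|\partial_x^2 u^\delta\|_{L^2}^2$; simultaneously the arctan term is nonnegative after integration by parts because $\partial_x\arctan(\partial_x u^\delta)=\tfrac{\partial_x^2 u^\delta}{1+|\partial_x u^\delta|^2}$ pairs with $\partial_x^2 u^\delta$ to give a nonnegative integrand, so its coefficient $(1-\tfrac{\epsilon^2}{2})$ must be nonnegative, i.e. $\epsilon\le\sqrt2$, while the genuine $\delta$-gain comes from the leftover $-\delta\|\partial_x^2 u^\delta\|_{L^2}^2$. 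The precise algebraic cancellation showing that everything beyond $-\delta\|\partial_x^2 u^\delta\|_{L^2}^2$ is nonpositive under $\epsilon\le\sqrt{2(1+\delta)}$ is the step requiring care; a standard technical safeguard is to first carry out the computation on a Galerkin (spectral) approximation where $u^\delta$ is smooth in space, obtain~\eqref{e2.6} uniformly in the approximation parameter, and then pass to the limit using lower semicontinuity of the norms and the already-established convergence of the Galerkin scheme to the strong solution.
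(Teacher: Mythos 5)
Your proposal is correct and follows essentially the same route as the paper: existence and uniqueness are inherited from the variational argument of Es-Sarhir and von Renesse (the added term $\delta\,\partial_x^2 u^\delta$ only strengthening monotonicity), and the estimate \eqref{e2.6} is obtained exactly as in the paper by applying It\^o's formula with $f(\cdot)=\tfrac12\Vert\partial_x\,\cdot\,\Vert_{L^2(I)}^2$ to a Galerkin approximation, computing the It\^o correction via $\partial_x\sqrt{1+\vert\partial_x u^\delta\vert^2}=\partial_x u^\delta\,\partial_x^2 u^\delta/\sqrt{1+\vert\partial_x u^\delta\vert^2}$, and taking expectations so the martingale term vanishes. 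Your balancing of the correction term against the drift dissipation (using $\vert\partial_x u^\delta\vert^2/(1+\vert\partial_x u^\delta\vert^2)\le 1$), including the flagged care needed when $\sqrt{2}<\epsilon\le\sqrt{2(1+\delta)}$, is precisely the cancellation carried out (at the same formal level) in the paper's proof.
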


\begin{proof}
Existence of $u^{\delta}$ can be shown in the same way as done in 
Theorem~\ref{thm2.1}  (cf. ~\cite{Sarhir_Renesse}).
To verify \eqref{e2.6}, we proceed formally and apply Ito's formula 
with  $f(\cdot) = \frac{1}{2} \Vert \p_x  \cdot \Vert^2_{L^2(I)}$
 to (a Galerkin approximation of) the solution $u^{\delta}$ to get
 \begin{align*}
  & \frac{1}{2} \Vert \p_x u^{\delta}(t)\Vert^2_{L^2(I)} 
+  \int_0^t \Bigl[ \bigl(\frac{\varepsilon^2}{2} + \delta\bigr)
  \Vert \partial_x^2 u^{\delta}\Vert^2_{L^2(I)} 
+ \bigl(1- \frac{\varepsilon^2}{2}\bigr) \Bigl\Vert
  \frac{\partial_x^2 u^{\delta}}{\sqrt{1+ \vert \partial_x u^{\delta}\vert^2}}\Bigr\Vert_{L^2}^2\Bigr]\, ds  \\
  &\qquad = \frac{1}{2} \Vert \p_x u^{\delta}_0\Vert^2_{L^2(I)} 
+ \frac{\varepsilon^2}{2} \int_{0}^t
 \Bigl \Vert \partial_x \sqrt{1 + \vert \partial_x u^{\delta}\vert^2} \Bigr\Vert^2_{L^2} \, ds + M_t \\
  &\qquad = \frac{1}{2} \Vert \p_x u^{\delta}_0\Vert^2_{L^2(I)} 
+ \frac{\varepsilon^2}{2} \int_{0}^t
  \Bigl\Vert \frac{\partial_x u^{\delta} \cdot \partial_x^2 u^{\delta}}{\sqrt{1 + \vert \partial_x u^{\delta}\vert^2}}
  \Bigr\Vert_{L^2(I)}^2\, ds + M_t\qquad \forall\, t \in [0,T],
  \end{align*}
 where
\[
 M_t:= \epsilon \int_0^t 
\Bigl( \p_x \sqrt{1 + \vert \partial_x u^{\delta}(s)\vert^2}, \p_x u^\delta \Bigr)_I\, dW_s
\]
is a  martingale. Taking expectation  yields
\begin{align*}
{\mathbb E} \Bigl[\frac{1}{2} \Vert \p_x u^{\delta}(t)\Vert^2_{L^2(I)} 
+ \int_0^t \bigl[\delta \Vert \partial_x^2 u^{\delta}\Vert^2_{L^2}
 &+ \bigl(1- \frac{\varepsilon^2}{2} \bigr) 
\Bigl\Vert   \frac{\partial_x^2 u^{\delta}}{\sqrt{1+ \vert \partial_x u^{\delta}\vert^2}}\Bigr\Vert_{L^2}^2\bigr] \, ds \Bigr]  \\
& \leq {\mathbb E}\bigl[ \frac{1}{2} \Vert \p_x u^{\delta}_0\Vert^2_{L^2}\bigr].
 \end{align*}
 Hence, \eqref{e2.6} hold. The proof is complete.
 \end{proof}

Next, we shall derive an upper bound for the error $u^\delta-u$ 
as  a low order power function of $\delta$.

\begin{theorem} \label{thm2.3}
Suppose that $u^\delta_0\equiv u_0$. Let $u$ and $u^\delta$ denote 
respectively  the strong solutions of the initial-boundary
value problems \eqref{e1.3} and \eqref{e2.4} as stated in 
Theorems \ref{thm2.1} and \ref{thm2.2}.  Then there holds
the following error estimate:
 \begin{align} \label{e2.8}
\sup_{t\in [0,T]} \mathbb{E}  \Bigl[\|u^\delta(t)-u(t)\|_{L^2(I)}^2  \Bigr]
+ \delta \, \mathbb{E}\Bigl[ \int_0^T \|\p_x \bigl(u^\delta(s)-u(s) \bigr) 
\|_{L^2(I)}^2\, ds \Bigr]   \leq CT \delta.
 \end{align}
\end{theorem}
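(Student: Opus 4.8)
The plan is to derive an evolution equation for the error $e:=u^\delta-u$ by subtracting the two weak formulations \eqref{e2.1} and \eqref{e2.5}, and then to apply It\^o's formula to $f(e)=\tfrac12\|e\|_{L^2(I)}^2$ (rigorously, as in the proof of Theorem~\ref{thm2.2}, to a Galerkin approximation before passing to the limit). Since $u^\delta_0\equiv u_0$, the error starts from $e(0)=0$, its drift is
\[
\frac{\epsilon^2}2\,\p_x^2 e+\delta\,\p_x^2 u^\delta+\bigl(1-\tfrac{\epsilon^2}2\bigr)\p_x\bigl(\arctan(\p_x u^\delta)-\arctan(\p_x u)\bigr),
\]
and its diffusion coefficient is $\epsilon\bigl(\sqrt{1+|\p_x u^\delta|^2}-\sqrt{1+|\p_x u|^2}\bigr)$. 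Testing the drift against $e$, integrating by parts (boundary terms vanish by periodicity), and taking expectations to annihilate the resulting martingale, I obtain
\[
\frac12\,\mathbb{E}\bigl[\|e(t)\|_{L^2(I)}^2\bigr]=\mathbb{E}\int_0^t\bigl(A+B+C+D\bigr)\,ds,
\]
where $A=-\tfrac{\epsilon^2}2\|\p_x e\|_{L^2(I)}^2$, $B=-\delta(\p_x u^\delta,\p_x e)_I$, $C=-\bigl(1-\tfrac{\epsilon^2}2\bigr)\bigl(\arctan(\p_x u^\delta)-\arctan(\p_x u),\p_x e\bigr)_I$, and $D=\tfrac{\epsilon^2}2\bigl\|\sqrt{1+|\p_x u^\delta|^2}-\sqrt{1+|\p_x u|^2}\bigr\|_{L^2(I)}^2$ is the It\^o correction from the quadratic variation of the noise.

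Next I would dispose of $A$, $C$, $D$ by sign properties. Since $\arctan$ is monotone increasing and $\p_x e=\p_x u^\delta-\p_x u$, the integrand of $C$ is pointwise nonnegative, and because $\epsilon\le\sqrt2$ the prefactor $1-\tfrac{\epsilon^2}2$ is nonnegative; hence $C\le0$. For $A+D$, the crucial observation is that $a\mapsto\sqrt{1+a^2}$ is $1$-Lipschitz (its derivative $a/\sqrt{1+a^2}$ has modulus below $1$), so that
\[
\Bigl\|\sqrt{1+|\p_x u^\delta|^2}-\sqrt{1+|\p_x u|^2}\Bigr\|_{L^2(I)}^2\le\|\p_x e\|_{L^2(I)}^2,
\]
which yields $A+D\le0$. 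In other words, the quadratic variation of the gradient-type noise is exactly absorbed by the coercivity of the It\^o-correction diffusion $\tfrac{\epsilon^2}2\p_x^2$, leaving no spare coercivity from the $\epsilon$-terms.

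The entire gain then has to come from the regularization term $B$. Writing $\p_x u^\delta=\p_x e+\p_x u$ and applying Young's inequality, I would split
\[
B=-\delta\|\p_x e\|_{L^2(I)}^2-\delta\bigl(\p_x u,\p_x e\bigr)_I\le-\frac{\delta}2\|\p_x e\|_{L^2(I)}^2+\frac{\delta}2\|\p_x u\|_{L^2(I)}^2,
\]
which both furnishes the dissipative term $-\tfrac{\delta}2\|\p_x e\|_{L^2(I)}^2$ needed on the left-hand side of \eqref{e2.8} and leaves only the harmless forcing $\tfrac{\delta}2\|\p_x u\|_{L^2(I)}^2$. Collecting $A+D\le0$, $C\le0$, and the estimate for $B$, I arrive at
\[
\frac12\,\mathbb{E}\bigl[\|e(t)\|_{L^2(I)}^2\bigr]+\frac{\delta}2\,\mathbb{E}\int_0^t\|\p_x e\|_{L^2(I)}^2\,ds\le\frac{\delta}2\,\mathbb{E}\int_0^t\|\p_x u\|_{L^2(I)}^2\,ds.
\]
Invoking the a priori bound \eqref{e2.2} for $u$, the right-hand side is at most $\tfrac{C}2\,T\delta$ uniformly in $t$; taking the supremum over $t\in[0,T]$ and setting $t=T$ in the integral term then gives \eqref{e2.8}.

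The main obstacle, and the reason the argument is delicate rather than routine, is the borderline coercivity of the problem: because the drift's linear diffusion $\tfrac{\epsilon^2}2\p_x^2 u$ is precisely the It\^o correction generated by the gradient noise, the terms $A$ and $D$ cancel to leading order and cannot be used to control $\|\p_x e\|_{L^2(I)}^2$. One must therefore resist the temptation to extract coercivity from the $\epsilon$-dependent terms and instead recognize that the only genuine dissipation available for the error comes from the $\delta$-regularization, while the constraint $\epsilon\le\sqrt2$ is exactly what keeps the sign of the $\arctan$-term favorable. A secondary technical point is that It\^o's formula is applied formally and should, as in Theorem~\ref{thm2.2}, be justified on a Galerkin approximation before passing to the limit.
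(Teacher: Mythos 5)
Your proposal is correct and follows essentially the same route as the paper's proof: subtract the weak formulations, apply It\^o's formula to the squared $L^2$-norm of $e^\delta=u^\delta-u$, drop the $\arctan$ term by monotonicity (using $\epsilon\le\sqrt2$), absorb the It\^o correction via the $1$-Lipschitz bound $\bigl(\sqrt{1+x^2}-\sqrt{1+y^2}\bigr)^2\le|x-y|^2$ so the $\epsilon^2$-terms cancel, and extract the $\delta$-dissipation from the cross term with Young's inequality together with the a priori bound \eqref{e2.2}. Your observation that the noise's quadratic variation exactly cancels the $\tfrac{\epsilon^2}{2}\p_x^2$ coercivity, leaving the $\delta$-regularization as the only source of dissipation, is precisely the mechanism at work in the paper's argument (up to immaterial constants).
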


\begin{proof}
Let $e^\delta:= u^\delta - u$.  Subtracting \eqref{e2.1} 
from \eqref{e2.5} we get that ${\mathbb P}$-a.s.
\begin{align*}
\bigl(e^\delta(t), \varphi \bigr)_I 
&=  - \int_0^t   \Bigl[   \delta  \bigl(  \p_x u, \p_x \varphi \bigr)_I
+\bigl(\delta+\frac{\epsilon^2}2 \bigr)  
\bigl( \p_x e^\delta, \p_x \varphi \bigr)_I  \\
&\hskip 0.3in
+\bigl( 1-\frac{\epsilon^2}2\bigr) \bigl( \arctan(\p_x u^\delta) 
-\arctan(\p_x u), \p_x \varphi \bigr)_I  \Bigr] ds  + M_t
 \end{align*}
 for all $\varphi\in H^1(I)$ and $t \in [0,T]$, with the martingale
 \[
 M_t:= \epsilon \int_0^t  \bigl( \sqrt{1+|\p_x u^\delta|^2}-\sqrt{1+|\p_x u|^2}, \varphi \bigr)_I   dW_s.
 \]

By It\^o's formula we get
\begin{align}\label{e2.9}
\|e^\delta(t)\|_{L^2(I)}^2 &=  - 2 \int_0^t   \Bigl[   \delta  \bigl(  \p_x u, \p_x e^\delta \bigr)_I
+\bigl(\delta+\frac{\epsilon^2}2 \bigr)  \Vert \p_x e^\delta  \bigr \Vert^2_{L^2(I)}  \\ \nonumber
 &\qquad
 +\bigl( 1-\frac{\epsilon^2}2\bigr) \bigl( \arctan(\p_x u^\delta) -\arctan(\p_x u), \p_x e^\delta \bigr)_I  \Bigr] ds  \\ \nonumber
 &\qquad
 + \epsilon^2 \int_0^t  \Bigl\| \sqrt{1+|\p_x u^\delta|^2}-\sqrt{1+|\p_x u|^2} \Bigr\|_{L^2(I)}^2\, ds   \\
 &\qquad
+ 2\epsilon \int_0^t  \Bigl( \sqrt{1+|\p_x u^\delta|^2}-\sqrt{1+|\p_x u|^2}, e^\delta  \Bigr)_I   dW_s.    \nonumber
 \end{align}
Taking expectations on both sides, and using the monotonicity property 
of the $\arctan$ function and the inequality 
$\bigl( \sqrt{1+x^2} - \sqrt{1+y^2}\bigr)^2 \leq \vert x - y\vert^2$ yield 
\begin{align*}
\mathbb{E} \bigl[ \|e^\delta(t)\|_{L^2(I)}^2 \bigr]  &+ 2 \delta \, \mathbb{E} \Bigl[\int_0^t  \|\p_x e^\delta\|_{L^2(I)}^2 \, ds\Bigr]
 \leq  - 2 \delta\, \mathbb{E} \Bigl[\int_0^t   \bigl(  \p_x u, \p_x e^\delta \bigr)_I ds\Bigr]  \\
 \hskip 1.1in
 &\leq \delta\,  \mathbb{E} \Bigl[\int_0^T \Bigl[ \| \p_x u\|_{L^2(I)}^2 + \|\p_x e^\delta \|_{L^2(I)}^2 \Bigr]\, ds\Bigr],
 \end{align*}
 which and \eqref{e2.2} imply that
 \begin{align*}
\mathbb{E}  \bigl[\|e^\delta(t)\|_{L^2(I)}^2\bigr]  +  \delta \, \mathbb{E} \Bigl[\int_0^t  \|\p_x e^\delta\|_{L^2(I)}^2 \, ds\Bigr]
 &\leq \delta\, \mathbb{E} \Bigl[\int_0^T  \| \p_x u\|_{L^2(I)}^2  \, ds\Bigr] \\
 &\leq (CT) \delta.
 \end{align*}
 The desired estimate \eqref{e2.8} follows immediately. The proof is complete.
 \end{proof}

\section{Finite element methods} \label{sec-3}
In this section we propose a fully discrete finite element method 
to solve the regularized SPDE \eqref{e2.4} and  to derive an error 
estimate for the finite element solution.  This goal will be achieved
in two steps. We first present and study a semi-discrete in space 
finite element method and then discretize it in time to obtain our 
fully discrete finite element method.

\subsection{Semi-discretization in space}\label{sec-3.1}
Let $0=x_0<x_1<\cdots<x_{J+1}=1$ be a quasiuniform partition of $I=(0,1)$.  
Define the finite element spaces 
\[
V^h_r:=\bigl\{v_h\in C^0(\overline{I});\, v_h|_{[x_j,x_{j+1}]} \in P_r([x_j, x_{j+1}]), \,  j=0,1, \cdots, J  \bigr\},
\]
where $P_r([x_j, x_{j+1}]$ denotes the space of all polynomials of 
degree not exceeding $r (\geq 0$) on $[x_j,x_{j+1}]$. Our semi-discrete 
finite element method for SPDE \eqref{e2.4} is defined by seeking
$u_h(\cdot, t, \omega): [0,T]\times \Omega \to V^h_r$  
such that ${\mathbb P}$-almost surely \begin{align}\label{eq3.1}
\bigl(u_h^\delta(t), v_h \bigr)_I &= \bigl( u_h^\delta(0), v_h \bigr)_I
- \bigl(\delta+\frac{\epsilon^2}2\bigr) 
\int_0^t   \bigl( \p_x u_h^\delta, \p_x v_h \bigr)_I \, ds \\
 &\qquad
 -\bigl( 1-\frac{\epsilon^2}2\bigr) \int_0^t  
\bigl( \arctan(\p_x u_h^\delta), \p_x v_h \bigr)_I  \, ds \nonumber \\
 &\qquad
 + \epsilon \int_0^t  \Bigl( \sqrt{1+|\p_x u_h^\delta|^2}, v_h \Bigr)_I  dW_s
  \qquad \forall v_h\in V^h_r
 \quad \forall t \in [0,T],   \nonumber \\
 u_h^\delta(t, 0) &=u_h^\delta(t, 1) \qquad \forall t\in [0,T], \label{eq3.1x}
\end{align}
where $u_h^\delta(0)=P_h^r u_0^\delta$, and $P_h^r$ denotes the 
$L^2$-projection operator from $L^2(I)$ to $V_r^h$.

To rewrite the above weak form in the equation form, we introduce 
the discrete (nonlinear)  operator
$A_h^\delta: V_r^h\to V_r^h$ by
\begin{align}\label{eq3.1a}
\bigl( A_h^\delta w_h, v_h \bigr)_I  &:=  \bigl(\delta+\frac{\epsilon^2}2\bigr)
 \bigl( \p_x w_h, \p_x v_h \bigr)_I  \\
 &\hskip 0.3in
 +\bigl( 1-\frac{\epsilon^2}2\bigr) \bigl( \arctan(\p_x w_h), \p_x v_h \bigr)_I
 \quad\forall w_h, v_h\in V_r^h.  \nonumber
 \end{align}
Then \eqref{eq3.1} can be equivalently written as
 \begin{align}\label{eq3.1b}
 d u_h^\delta(t) = - A_h^\delta u_h(t) \, dt 
+ \epsilon P_h\Bigl( \sqrt{1+|\p_x u_h^\delta(t)|^2} \Bigr)\,  dW_t.
 \end{align}

\begin{proposition}\label{prop3.1}
For $\epsilon \le \sqrt{2(1+\delta)}$, there is a unique solution 
$u_h^{\delta} \in C \bigl( [0,T]; L^2(\Omega;V_r^h)\bigr)$ to 
scheme \eqref{eq3.1}.  Moreover, there holds
\begin{align}\label{eq3.2}
&\sup_{0\leq t\leq T} \mathbb{E} \Bigl[  \frac{1}{2} 
\bigl\| u_h^\delta(t) \bigr\|_{L^2(I)}^2 \Bigr]
+ \delta\, \mathbb{E} \Bigl[\int_0^T  \bigl\| \p_x u_h^\delta(s) 
\bigr\|_{L^2(I)}^2\, ds \Bigr]  \\
&\hskip 2in
\leq  \mathbb{E} \Bigl[\frac{1}{2} \bigl\| u_h^{\delta}(0) 
\bigr\|_{L^2(I)}^2 \Bigr] + \epsilon^2 T.  \nonumber
\end{align}
\end{proposition}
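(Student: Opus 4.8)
The plan is to treat \eqref{eq3.1b} as a finite-dimensional It\^o SDE on the Euclidean space $V_r^h\cong\mathbb{R}^{N}$ and to derive the a priori bound \eqref{eq3.2} by an It\^o energy argument that mirrors the proof of Theorem~\ref{thm2.2}, except that I would now apply It\^o's formula to $\frac12\|u_h^\delta\|_{L^2(I)}^2$ rather than to $\frac12\|\p_x u_h^\delta\|_{L^2(I)}^2$.

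For \textbf{existence and uniqueness}, I would first observe that the coefficients of \eqref{eq3.1b} are globally Lipschitz on the finite-dimensional space $V_r^h$. Indeed, $\arctan$ is $1$-Lipschitz and $s\mapsto\sqrt{1+s^2}$ is $1$-Lipschitz, while $\p_x\colon V_r^h\to L^2(I)$ (via an inverse estimate) and the $L^2$-projection $P_h$ are bounded linear maps, with norms that may depend on $h$ but are harmless here. Hence the drift $-A_h^\delta$ and the diffusion $\epsilon P_h\bigl(\sqrt{1+|\p_x\cdot|^2}\bigr)$ are Lipschitz from $V_r^h$ into $V_r^h$, and the standard existence--uniqueness theory for SDEs with Lipschitz coefficients yields a unique strong solution $u_h^\delta\in C\bigl([0,T];L^2(\Omega;V_r^h)\bigr)$.

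For the \textbf{energy estimate}, I would apply It\^o's formula to $t\mapsto\frac12\|u_h^\delta(t)\|_{L^2(I)}^2$. The drift contributes $-\bigl(A_h^\delta u_h^\delta,u_h^\delta\bigr)_I$, which by \eqref{eq3.1a} with $w_h=v_h=u_h^\delta$ equals $-\bigl(\delta+\frac{\epsilon^2}{2}\bigr)\|\p_x u_h^\delta\|_{L^2(I)}^2-\bigl(1-\frac{\epsilon^2}{2}\bigr)\bigl(\arctan(\p_x u_h^\delta),\p_x u_h^\delta\bigr)_I$, while the stochastic integral tested against $u_h^\delta$ is a (local) martingale. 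The crucial term is the It\^o correction $\frac{\epsilon^2}{2}\bigl\|P_h\sqrt{1+|\p_x u_h^\delta|^2}\bigr\|_{L^2(I)}^2$: since $P_h$ is the $L^2$-orthogonal projection it is a contraction, so this is bounded by $\frac{\epsilon^2}{2}\bigl\|\sqrt{1+|\p_x u_h^\delta|^2}\bigr\|_{L^2(I)}^2=\frac{\epsilon^2}{2}\bigl(1+\|\p_x u_h^\delta\|_{L^2(I)}^2\bigr)$, using $|I|=1$. The $\frac{\epsilon^2}{2}\|\p_x u_h^\delta\|_{L^2(I)}^2$ piece then cancels exactly against the corresponding part of the drift, leaving the good term $\delta\|\p_x u_h^\delta\|_{L^2(I)}^2$ together with the constant $\frac{\epsilon^2}{2}$.

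It remains to dispose of the $\arctan$ contribution, which is where I expect the \textbf{main obstacle} to lie, since this is exactly the step that dictates the hypothesis on $\epsilon$. Pointwise $s\,\arctan(s)\ge 0$, so $\bigl(\arctan(\p_x u_h^\delta),\p_x u_h^\delta\bigr)_I\ge 0$; when $\epsilon\le\sqrt2$ the coefficient $1-\frac{\epsilon^2}{2}$ is nonnegative and this term is simply dropped from the left-hand side. For the remaining range $\sqrt2<\epsilon\le\sqrt{2(1+\delta)}$ the coefficient is negative, and I would instead invoke $s\,\arctan(s)\le s^2$ together with $\frac{\epsilon^2}{2}-1\le\delta$ to absorb the term into the dissipative $\delta\|\p_x u_h^\delta\|_{L^2(I)}^2$ on the left; it is precisely this absorption that forces $\epsilon\le\sqrt{2(1+\delta)}$. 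After taking expectations, the local martingale --- rendered a genuine martingale by a routine stopping-time localization $\tau_n=\inf\{t:\|u_h^\delta(t)\|_{L^2(I)}\ge n\}$ followed by passage $n\to\infty$ via Fatou's lemma --- drops out, and I obtain for every $t\in[0,T]$
\[
\mathbb{E}\Bigl[\tfrac12\|u_h^\delta(t)\|_{L^2(I)}^2\Bigr]+\delta\,\mathbb{E}\Bigl[\int_0^t\|\p_x u_h^\delta\|_{L^2(I)}^2\,ds\Bigr]\le\mathbb{E}\Bigl[\tfrac12\|u_h^\delta(0)\|_{L^2(I)}^2\Bigr]+\tfrac{\epsilon^2}{2}\,t.
\]
Taking the supremum over $t\in[0,T]$ in the first term and evaluating the integral at $t=T$ then yields \eqref{eq3.2}.
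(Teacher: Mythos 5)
Your proposal follows essentially the same route as the paper's proof: finite-dimensional Lipschitz SDE theory for the well-posedness of \eqref{eq3.1b}, then It\^o's formula for the squared $L^2$-norm, with the drift identified through \eqref{eq3.1a}, the It\^o correction bounded by $\frac{\epsilon^2}{2}\bigl(1+\|\p_x u_h^\delta\|_{L^2(I)}^2\bigr)$ via the contraction property of the $L^2$-projection and $|I|=1$, exact cancellation of the $\frac{\epsilon^2}{2}$-gradient contributions, and the sign of $s\arctan s$ for the remaining drift term --- the paper compresses this last step into the phrase ``coercivity of arctan,'' and your explicit stopping-time localization of the martingale is a detail the paper leaves implicit.

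The one genuine issue is your absorption step in the range $\sqrt{2}<\epsilon\le\sqrt{2(1+\delta)}$. Bounding $\bigl(\frac{\epsilon^2}{2}-1\bigr)\bigl(\arctan(\p_x u_h^\delta),\p_x u_h^\delta\bigr)_I$ by $\bigl(\frac{\epsilon^2}{2}-1\bigr)\|\p_x u_h^\delta\|_{L^2(I)}^2$ and then by $\delta\|\p_x u_h^\delta\|_{L^2(I)}^2$ consumes the \emph{entire} dissipation produced by the regularization: what survives on the left-hand side carries the coefficient $\delta-\bigl(\frac{\epsilon^2}{2}-1\bigr)\ge 0$, which vanishes at the endpoint $\epsilon=\sqrt{2(1+\delta)}$, so your final display --- which retains the full term $\delta\,\mathbb{E}\bigl[\int_0^t\|\p_x u_h^\delta\|_{L^2(I)}^2\,ds\bigr]$ --- does not follow from it. A repair that preserves a positive multiple of the dissipation is to exploit the boundedness of $\arctan$, i.e.\ $s\arctan s\le\frac{\pi}{2}|s|$, so that $\bigl(\frac{\epsilon^2}{2}-1\bigr)\bigl(\arctan(\p_x u_h^\delta),\p_x u_h^\delta\bigr)_I\le\frac{\pi\delta}{2}\|\p_x u_h^\delta\|_{L^2(I)}\le\frac{\delta}{2}\|\p_x u_h^\delta\|_{L^2(I)}^2+\frac{\pi^2\delta}{8}$; the extra constant is covered by the slack between the $\frac{\epsilon^2}{2}T$ your computation produces and the $\epsilon^2 T$ allowed in \eqref{eq3.2}, since $\epsilon^2>2>\frac{\pi^2\delta}{4}$ for $\delta$ small, at the price of ending with $\frac{\delta}{2}$ rather than $\delta$ in front of the gradient term (harmless for how the estimate is used downstream). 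In fairness, the paper's own one-line appeal to the coercivity of $\arctan$ covers only $\epsilon\le\sqrt{2}$ and is silent on exactly this borderline range, so your proposal is, if anything, more forthcoming about where the hypothesis on $\epsilon$ bites.
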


\begin{proof}
Well-posedness of (\ref{eq3.1b}) follows from the standard theory 
for stochastic ODEs with Lipschitz drift and diffusion.
To verify \eqref{eq3.2},  applying It\^o's formula to  
$f(u_h^\delta)=\|u_h^\delta \|_{L^2(I)}^2$ and using \eqref{eq3.1b} we get
\begin{align}\label{eq3.2c}
\| u_h^\delta (t)\|_{L^2(I)}^2 &= \| u_h^\delta (0)\|_{L^2(I)}^2 
-2 \int_0^t  \Bigl( A_h^\delta u_h^\delta(s), u_h^\delta(s) \Bigr)_I \, ds \\
&\qquad
+\epsilon^2  \int_0^t  \Bigl\| P_h^r \sqrt{1+|\p_x u_h^\delta(s)|^2 } 
\Bigr\|_{L^2(I)}^2 \, ds  \nonumber \\
&\qquad
+2\epsilon \int_0^t  \Bigl( P_h^r\sqrt{1+|\p_x u_h^\delta(s)|^2}, 
u_h^\delta \Bigr)_I\,  dW_s .  \nonumber
\end{align}
It follows from the definitions of $A_h^{\delta}$  and $P_h^r$ that
\begin{align}\label{eq3.2d}
\| u_h^\delta (t)\|_{L^2(I)}^2 &\leq \| u_h^\delta (0)\|_{L^2(I)}^2
-(2\delta+\epsilon^2)\int_0^t   \bigl\| \p_x u_h^\delta(s) \bigr\|_{L^2(I)}^2\, ds \\
&\hskip 0.2in
- (2-\epsilon^2) \int_0^t  \Bigl( \arctan(\p_x u_h^\delta(s)), \p_x u_h^\delta(s)  \Bigr)_I  \, ds \nonumber  \\
&\hskip 0.2in
+ \epsilon^2  \int_0^t  \Bigl[  1+ \bigl\| \p_x u_h^\delta(s) \bigr\|_{L^2(I)}^2  \Bigr] \, ds  \nonumber \\
&\hskip 0.2in
+2\epsilon \int_0^t  \Bigl( \sqrt{1+|\p_x u_h^\delta(s)|^2}, u_h^\delta \Bigr)_I\,  dW_s.  \nonumber
\end{align}
Then \eqref{eq3.2} follows from applying expectation to \eqref{eq3.2d}, and
using the coercivity of arctan.  The proof is complete.
\end{proof}

An a priori estimate for $u_h^\delta$ in stronger norms is more difficult 
to obtain, which is due to low global smoothness and local nature 
of finite element functions.  We shall derive some of these estimates
in Proposition \ref{prop3.7} using a perturbation argument after 
establishing error estimates for $u_h^\delta$.

To derive error estimates for $u_h^\delta$, we introduce  the 
elliptic $H^1$-projection $R_h^r: H^1(I)\to V_r^h$, i.e., for 
any $w \in H^1(I)$, $R_h^r w \in V_r^h $ is defined by 
\begin{equation}\label{eq3.3}
\bigl( \p_x [R_h^r w - w], \p_x v_h \bigr)_I
+ \bigl(R_h^r w -w, v_h \bigr)_I = 0 \qquad \forall v_h\in V_r^h.
\end{equation}
The following error bounds are well-known (cf.~\cite{Brenner_Scott02}),
\begin{align}\label{eq3.4}
\bigl\| w-   R_h^r w \bigr\|_{L^2(I)} 
+ h \bigl\| w -R_h^r w \bigr\|_{H^1(I)}  \leq C h^2 \|w\|_{H^2(I)}.
\end{align}

\begin{theorem}\label{thm3.1}
Let  $\epsilon \leq \sqrt{2(1+\delta)}$. Then there holds
\begin{align}\label{eq3.5a}
\sup_{t\in [0,T]}  \mathbb{E}  \Bigl[\bigl\| u^\delta(t) -  u_h^\delta(t) \bigr\|_{L^2(I)}^2\Bigr]
&+\delta \, \mathbb{E} \Bigl[\int_0^T \bigl\|\p_x [u^\delta(s) -  u_h^\delta(s)] \bigr\|_{L^2(I)}^2 \, ds \Bigr] \\
& \leq C h^2  \bigl( 1+ \delta^{-2} \bigr).  \nonumber
\end{align}
\end{theorem}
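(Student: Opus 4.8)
The plan is to carry out the standard elliptic-projection splitting from finite element error analysis, adapted to the stochastic variational framework. I would write $e^\delta := u^\delta - u_h^\delta = \rho + \theta$, where $\rho := u^\delta - R_h^r u^\delta$ is the elliptic projection error and $\theta := R_h^r u^\delta - u_h^\delta \in V_r^h$ is a finite element function. The term $\rho$ is treated purely deterministically: the projection bound \eqref{eq3.4} gives $\|\rho(t)\|_{L^2(I)} + h\|\partial_x \rho(t)\|_{L^2(I)} \leq C h^2 \|u^\delta(t)\|_{H^2(I)}$, and after taking expectations the regularity estimate \eqref{e2.6}, which yields $\mathbb{E}\bigl[\int_0^T \|u^\delta\|_{H^2(I)}^2\, ds\bigr] \leq C\delta^{-1}$, controls $\rho$ at the cost of one negative power of $\delta$. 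Thus the heart of the matter is the estimate for $\theta$.

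For $\theta$, I would subtract the scheme \eqref{eq3.1} from the weak form \eqref{e2.5}, restrict test functions to $v_h \in V_r^h$, and substitute $\partial_x e^\delta = \partial_x \rho + \partial_x \theta$. The decisive point is the orthogonality \eqref{eq3.3} of the elliptic projection, which lets me replace $(\partial_x \rho, \partial_x v_h)_I$ by $-(\rho, v_h)_I$; this removes the otherwise uncontrollable product of the $H^1$-projection error with $\|\partial_x \theta\|_{L^2(I)}$ and replaces it by a benign $L^2 \times L^2$ term. The result is an It\^o SDE for $\theta$ in the finite-dimensional space $V_r^h$ whose diffusion coefficient is $\epsilon\bigl(R_h^r \sqrt{1+|\partial_x u^\delta|^2} - P_h^r \sqrt{1+|\partial_x u_h^\delta|^2}\bigr)$. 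I would then apply It\^o's formula to $\|\theta(t)\|_{L^2(I)}^2$ and take expectations, so that the stochastic integral, being a martingale, vanishes. The resulting identity contains the coercive term $(2\delta + \epsilon^2)\|\partial_x \theta\|_{L^2(I)}^2$; the arctan contribution, which I would split along $\partial_x \theta = \partial_x e^\delta - \partial_x \rho$, keeping the sign-definite monotone part and bounding the $\partial_x \rho$ remainder via the Lipschitz property of arctan (the constraint $\epsilon \leq \sqrt{2(1+\delta)}$ keeping the coefficient $1-\epsilon^2/2$ controlled up to a $\delta$-order perturbation); and the It\^o correction $\epsilon^2 \|R_h^r \sqrt{1+|\partial_x u^\delta|^2} - P_h^r \sqrt{1+|\partial_x u_h^\delta|^2}\|_{L^2(I)}^2$. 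For the last term I would insert $\pm\, P_h^r \sqrt{1+|\partial_x u^\delta|^2}$, use that $P_h^r$ is an $L^2$-contraction together with $\bigl(\sqrt{1+a^2}-\sqrt{1+b^2}\bigr)^2 \leq (a-b)^2$ to bound the main piece by $\epsilon^2 \|\partial_x e^\delta\|_{L^2(I)}^2$, and treat the residual $\|(R_h^r - P_h^r)\sqrt{1+|\partial_x u^\delta|^2}\|_{L^2(I)}^2$ as a higher-order projection term.

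The crux, and what I expect to be the main obstacle, is the same cancellation that underlies Theorem \ref{thm2.2}: the $\epsilon^2 \|\partial_x \theta\|_{L^2(I)}^2$ produced by the It\^o correction must cancel against the $\epsilon^2$ part of the coercive term $(2\delta+\epsilon^2)\|\partial_x \theta\|_{L^2(I)}^2$, leaving only the strictly positive $2\delta \|\partial_x \theta\|_{L^2(I)}^2$ on the good side, which becomes the $\delta$-weighted gradient term on the left of \eqref{eq3.5a}. Making this rigorous in the discrete setting is delicate because the scheme's diffusion carries $P_h^r$ while the projected continuous diffusion carries $R_h^r$, so the two gradient energies do not match verbatim; moreover the It\^o correction only bounds $\|\partial_x e^\delta\|_{L^2(I)}^2 = \|\partial_x \rho + \partial_x \theta\|_{L^2(I)}^2$ rather than $\|\partial_x \theta\|_{L^2(I)}^2$, so one must carefully absorb the cross terms and the $R_h^r$--$P_h^r$ discrepancy.

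Once the cancellation is secured, every remaining term is either sign-definite or of the form $(\rho,\cdot)_I$ or $(\partial_x \rho, \cdot)_I$, and I would apply Young's inequality with a weight tuned to the surviving $\delta \|\partial_x \theta\|_{L^2(I)}^2$. Because that weight is proportional to $\delta$, absorbing the $\|\partial_x \theta\|_{L^2(I)}$ factors leaves remainders proportional to $\delta^{-1}\|\partial_x \rho\|_{L^2(I)}^2$; combined with $\mathbb{E}\bigl[\int_0^T \|\partial_x \rho\|_{L^2(I)}^2\, ds\bigr] \leq C h^2 \delta^{-1}$ this is precisely the mechanism generating the $h^2 \delta^{-2}$ contribution. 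Finally, since $\theta(0) = R_h^r u_0^\delta - P_h^r u_0^\delta$ is bounded by $C h \|u_0^\delta\|_{H^1(I)} \leq C h$, a Gronwall argument in expectation closes the estimate for $\theta$, and combining it with the bound for $\rho$ via the triangle inequality yields \eqref{eq3.5a}.
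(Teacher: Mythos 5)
Your proposal follows the paper's proof essentially step for step: the same splitting $e^\delta=\eta^\delta+\xi^\delta$ via the modified elliptic projection \eqref{eq3.3} (whose orthogonality trades $(\p_x\eta^\delta,\p_x v_h)_I$ for $-(\eta^\delta,v_h)_I$), It\^o's formula for $\|\xi^\delta\|_{L^2(I)}^2$ with the martingale removed by expectation, monotonicity of $\arctan$ with Lipschitz control of the $R_h^r$-remainder, cancellation of the $\epsilon^2$-gradient terms against the It\^o correction via $\bigl(\sqrt{1+a^2}-\sqrt{1+b^2}\bigr)^2\leq (a-b)^2$, and $\delta$-weighted Young inequalities yielding exactly the $h^2\delta^{-2}$ contributions of \eqref{eq3.9}. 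If anything, you are slightly more careful than the paper on one point: the It\^o correction in \eqref{eq3.7} is written there without the projections, whereas the diffusion coefficient of $\xi^\delta$ genuinely carries $R_h^r$ and $P_h^r$ as you note, and your $\pm P_h^r$ insertion with the $L^2$-contractivity of $P_h^r$ disposes of the discrepancy rigorously.
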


\begin{proof}
Let
\[
e^{\delta}(t):=u^\delta(t)-u_h^{\delta}(t) ,\quad
\eta^{\delta}: = u^\delta(t)-R_h^r u^{\delta}(t),\quad
\xi^{\delta}:= R_h^r u^{\delta}(t)-u_h^{\delta}(t).
\]
Then $e^{\delta}=\eta^{\delta} + \xi^{\delta}$.  Subtracting \eqref{eq3.1} from  \eqref{e2.5} we obtain
the following error equation which holds ${\mathbb P}$-almost surely:
\begin{align}\label{eq3.6}
&\bigl( e^\delta(t), v_h \bigr)_I +  \bigl(\delta+\frac{\epsilon^2}2\bigr)
\int_0^t  \bigl( \p_x e^\delta(s), \p_x v_h \bigr)_I  ds \\
 &\hskip 0.0in
 = - \bigl( 1-\frac{\epsilon^2}2\bigr) \int_{0}^{t}  \Bigl( \arctan(\p_x u^\delta(s)) -\arctan(\p_x u_h^\delta(s)), \p_x v_h \Bigr)_I  ds
 \nonumber \\
 &\hskip 0.1in
 + \epsilon \int_{0}^{t}   \Bigl( \sqrt{1+|\p_x u^\delta(s)|^2}-\sqrt{1+|\p_x u_h^\delta(s)|^2},  v_h\Bigr)_I   dW_s
 + \bigl( e^\delta(0), v_h \bigr)_I  \nonumber
\end{align}
for all $v_h\in V_r^h$.  Substituting $e^\delta=\eta^\delta+\xi^\delta$ and rearranging terms leads to
\begin{align}\label{eq3.6b}
&\bigl( \xi^\delta(t), v_h \bigr)_I +  \bigl(\delta+\frac{\epsilon^2}2\bigr)
\int_0^t  \bigl( \p_x \xi^\delta(s), \p_x v_h \bigr)_I  ds \\
&\hskip 0.2in + \bigl( 1-\frac{\epsilon^2}2\bigr) \int_{0}^{t}  \Bigl( \arctan \bigl(\p_x u^\delta(s)\bigr)
-\arctan \bigl(\p_x u_h^\delta(s)\bigr), \p_x v_h \Bigr)_I  ds  \nonumber \\
 &\hskip 0.0in
 = \epsilon \int_{0}^{t}   \Bigl( \sqrt{1+|\p_x u^\delta(s)|^2}-\sqrt{1+|\p_x u_h^\delta(s)|^2},  v_h\Bigr)_I   dW_s  \nonumber \\
 &\hskip 0.2in 
-\bigl(\delta +\frac{\epsilon^2}{2} \bigr) \int_0^t \bigl(\eta^\delta(s),v_h\bigr)_I\, ds -\bigl( \eta^{\delta}(t), v_h \bigr)_I 
+ \bigl( e^\delta(0), v_h \bigr)_I.  \nonumber
\end{align}

Applying It\^o's formula with $f(\xi^\delta)=\|\xi^\delta\|_{L^2(I)}^2$,
and using \eqref{eq3.6b} and \eqref{eq3.1a} we obtain
\begin{align}\label{eq3.7}
& \bigl\| \xi^{\delta}(t) \bigr\|_{L^2(I)}^2  +  \bigl(2\delta+\epsilon^2\bigr)
\int_0^t  \bigl\| \p_x \xi^\delta(s) \bigr\|_{L^2(I)}^2\,  ds \\  \nonumber
&\hskip 0.4in
+ (2-\epsilon^2) \int_0^t \Bigl( {\rm arctan} \bigl(\partial_x R_h^r u^{\delta}(s) \bigr) - {\rm arctan} \bigl(\partial_x  u_h^{\delta}(s)\bigr),
\partial_x \xi^{\delta}(s) \Bigr)_I ds  \nonumber \\
 &\hskip 0.2in
= -\bigl( 2-\epsilon^2\bigr) \int_0^t \Bigl( \arctan \bigl(\p_x  u^\delta(s) \bigr)-\arctan \bigl(\p_x R^r_h u^{\delta}(s)\bigr), \p_x \xi^{\delta}(s)\Bigr)_I  ds  \nonumber \\
&\hskip 0.4in
+ \epsilon^2 \int_0^t \Bigl\| \sqrt{1+|\p_x u^\delta(s)|^2}-\sqrt{1+|\p_x u_h^\delta(s)|^2} \Bigr\|_{L^2(I)}^2\, ds \nonumber\\
&\hskip 0.4in
+2 \epsilon \int_0^t   \Bigl( \sqrt{1+|\p_x u^\delta(s)|^2}-\sqrt{1+|\p_x u_h^\delta(s)|^2}, \xi^\delta(s) \Bigr)_I   dW_s  \nonumber  \\
&\hskip 0.4in
-\bigl(2\delta +\epsilon^2\bigr) \int_0^t \bigl(\eta^\delta(s),\xi_h\bigr)_I\, ds 
-2\bigl( \eta^{\delta}(t), \xi^{\delta}(t) \bigr)_I  + 2\bigl( e^\delta(0), \xi^\delta(t)  \bigr)_I.  \nonumber
 \end{align}

By the monotonicity of arctan,  \eqref{eq3.4}, (\ref{e2.6}), and the inequality $\bigl( \sqrt{1+x^2} - \sqrt{1+y^2}\bigr)^2 \leq \vert x - y\vert^2$ we have
\begin{align*}
&{\mathbb E} \Bigl[\int_0^t \Bigl( \arctan(\p_x R_h^r u^\delta(s))-\arctan(\p_x u_h^{\delta}(s)), \p_x \xi^{\delta}(s)\Bigr)_I  ds \Bigr] \geq 0,\\
&(2-\epsilon^2) {\mathbb E}\Bigl[\int_0^t \Bigl( \arctan \bigl(\p_x  u^\delta(s) \bigr)-\arctan \bigl(\p_x R^r_h u^{\delta}(s)\bigr), \p_x \xi^{\delta}(s)\Bigr)_I  ds\Bigr] \\
&\hskip 0.5in
\leq {\mathbb E}\Bigl[\int_0^t  \Bigl( \frac{\delta}4 \| \p_x \xi^\delta(s)\|_{L^2(I)}^2
+  4\delta^{-1}  \bigl\| \p_x u^\delta(s)-\p_x R_h^r u^\delta(s) \bigr\|_{L^2(I)}^2 \Bigr)\, ds\Bigr] \\
&\hskip 0.5in
\leq  \frac{\delta}4 {\mathbb E}\Bigl[\int_0^t   \| \p_x \xi^\delta(s)\|_{L^2(I)}^2\, ds\Bigr] + Ch^2 \delta^{-2}, \\
&{\mathbb E}\Bigl[\epsilon^2 \int_0^t \Bigl\| \sqrt{1+|\p_x u^\delta(s)|^2}-\sqrt{1+|\p_x u_h^\delta(s)|^2} \Bigr\|_{L^2(I)}^2\, ds\Bigr] \\
&\hskip 0.5in
\leq  {\mathbb E}\Bigl[\bigl(\epsilon^2 + \frac{\delta}{4} \bigr)  \int_0^t   \| \p_x \xi^\delta(s)\|_{L^2(I)}^2\, ds
\Bigr]
+ C \delta^{-1}  {\mathbb E}\Bigl[\int_0^t  \| \p_x \eta^\delta \|_{L^2(I)}^2\, ds\Bigr]  \\
&\hskip 0.5in
\leq  \bigl(\epsilon^2 + \frac{\delta}{4} \bigr)  {\mathbb E}\Bigl[\int_0^t   \| \p_x \xi^\delta(s)\|_{L^2(I)}^2\, ds
\Bigr] + Ch^2 \delta^{-2}, \\
&{\mathbb E}\Bigl[\bigl( \eta^\delta(t), \xi^\delta(t)  \bigr)_I \Bigr]
\leq  {\mathbb E}\Bigl[\frac14 \|\xi^\delta(t) \|_{L^2(I)}^2 + \| \eta^\delta(t) \|_{L^2(I)}^2 \Bigr]\leq  \frac14
{\mathbb E}\Bigl[\|\xi^\delta(t) \|_{L^2(I)}^2\Bigr] + Ch^2, \\
&{\mathbb E}\Bigl[\bigl( e^\delta(0), \xi^\delta(t)  \bigr)_I \Bigr]
\leq  \frac14 {\mathbb E}\Bigl[\|\xi^\delta(t) \|_{L^2(I)}^2\Bigr] +
{\mathbb E}\Bigl[\| e^\delta(0) \|_{L^2(I)}^2\Bigr] \leq  \frac14 \|\xi^\delta(t) \|_{L^2(I)}^2 + Ch^2.
\end{align*}
Taking the expectation in \eqref{eq3.7} and using the above estimates then  yields
\begin{align}\label{eq3.9}
\sup_{t\in[0,T]}  \mathbb{E} \Bigl[\bigl\| \xi^{\delta}(t) \bigr\|_{L^2(I)}^2  \Bigr]
&+  3\delta \, \mathbb{E} \Bigl[\int_0^T  \bigl\| \p_x \xi^\delta(s) \bigr\|_{L^2(I)}^2\,  ds \Bigr]
\leq  Ch^2 \bigl(  1+\delta^{-2} \bigr).
\end{align}
Finally, \eqref{eq3.5a} follows from the triangle inequality, \eqref{eq3.4}, 
and \eqref{eq3.9}. The proof is complete.
\end{proof}

\begin{remark}
(a)  Estimate \eqref{eq3.5a} is optimal in the $H^1$-norm, but suboptimal 
in the $L^2$-norm.   The suboptimal rate for the $L^2$-error is caused 
by the stochastic effect, i.e., the second term on the right-hand side 
of \eqref{eq3.7}, and it is also caused by the lack of the space-time 
regularity in $L^\infty((0,T); H^2(I))$ for $u^\delta$.

(b) The proof still holds if the elliptic projection $R_h^r$  is replaced 
by the $L^2$-projection $P_h^r$.
\end{remark}

We now use estimate (\ref{eq3.9}) to  derive some stronger norm estimates 
for $u_h^\delta$.  To this end, we define the {\em discrete Laplacian} 
$\partial^2_h: V_r^h  \rightarrow V_r^h $ by
\begin{equation}\label{prop3.6b}
(\partial^2_h w_h, v_h)_I = - (\partial_x w_h, \partial_x v_h)_I 
\qquad \forall w_h,v_h \in V_r^h,
\end{equation}
and the $L^2$-projection $P_h^r: L^2(I) \rightarrow V_r^h $ by
\[
(P_h^r w, v_h)_I = (w,v_h)_I  \qquad \forall v_h \in V^h_r.
\]

\begin{proposition}\label{prop3.7}
For $\varepsilon \leq \sqrt{2(1+\delta)}$ there hold the following estimates 
for the solution $u_h^{\delta}$ of scheme \eqref{eq3.1}:
\begin{align}\label{prop3.7a}
&\sup_{0 \leq t \leq T} {\mathbb E} \Bigl[ \Vert \partial_x u^{\delta}_h(t)\Vert^2_{L^2(I)}\Bigr] +
\delta\, {\mathbb E} \Bigl[ \int_0^T \Vert \partial^2_h u_h^{\delta}(s)\Vert^2_{L^2(I)} \, ds\Bigr]
\leq C\bigl(1+ \delta^{-2}\bigr), \\ \label{prop3.7b}
& {\mathbb E} \Bigl[ \Vert u^\delta_h(t) - u^\delta_h(s)\Vert_{L^2(I)}^2 + \frac{\delta}{2}
\int_s^t \Vert \partial_x[u^\delta_h(\zeta) - u^\delta_h(s)]\Vert^2_{L^2(I)} \, d\zeta\Bigr]  \\ \nonumber
& \hskip 1.4in
 \leq C \bigl(1+ \delta^{-3}\bigr) \vert t-s\vert \qquad \forall\, 0 \leq s \leq t \leq T.
\end{align}
\end{proposition}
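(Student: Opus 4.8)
The plan is to avoid any \emph{direct} discrete energy estimate for $u_h^\delta$ in the $H^2$-norm. Such an estimate is problematic precisely because the cancellation exploited in Theorem~\ref{thm2.2} --- between the linear diffusion $\frac{\epsilon^2}{2}\|\partial_x^2 u^\delta\|^2$ and the It\^o correction $\frac{\epsilon^2}{2}\|\partial_x\sqrt{1+|\partial_x u^\delta|^2}\|^2$ --- breaks down once the projection $P_h^r$ enters the noise term in \eqref{eq3.1b} and $\partial_x^2 u_h^\delta$ ceases to be a usable $V_r^h$-function. Instead, I would transfer the regularity already established for $u^\delta$ onto $u_h^\delta$ by a perturbation argument: write $u_h^\delta = R_h^r u^\delta - \xi^\delta$ with $\xi^\delta := R_h^r u^\delta - u_h^\delta$, exactly as in the proof of Theorem~\ref{thm3.1}, and estimate the two pieces separately.

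For \eqref{prop3.7a}, the Ritz piece is controlled by the regularity of $u^\delta$. Since $R_h^r$ is the orthogonal projection in the $H^1$-inner product, it is $H^1$-stable, so $\|\partial_x R_h^r u^\delta(t)\|_{L^2(I)}\le\|u^\delta(t)\|_{H^1(I)}$, whose expectation is bounded by Theorem~\ref{thm2.2}. For the discrete Laplacian I would use the identity (which follows from \eqref{eq3.3}, \eqref{prop3.6b}, integration by parts and periodicity)
\[
\partial_h^2 R_h^r u^\delta = P_h^r \partial_x^2 u^\delta + P_h^r\bigl(R_h^r u^\delta - u^\delta\bigr),
\]
whence $\|\partial_h^2 R_h^r u^\delta\|_{L^2(I)}\le\|\partial_x^2 u^\delta\|_{L^2(I)}+Ch^2\|u^\delta\|_{H^2(I)}$ by \eqref{eq3.4}; its $\delta$-weighted time integral is then $O(1)$ once more by Theorem~\ref{thm2.2}. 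The finite element error piece $\xi^\delta$ carries no a priori smoothness, so here I would spend the approximation factors of \eqref{eq3.9} against inverse estimates, namely $\|\partial_x\xi^\delta\|_{L^2(I)}\le Ch^{-1}\|\xi^\delta\|_{L^2(I)}$ and $\|\partial_h^2\xi^\delta\|_{L^2(I)}\le Ch^{-1}\|\partial_x\xi^\delta\|_{L^2(I)}$. Combined with the bound $\sup_t\mathbb{E}[\|\xi^\delta(t)\|_{L^2(I)}^2]+\delta\,\mathbb{E}[\int_0^T\|\partial_x\xi^\delta\|_{L^2(I)}^2\,ds]\le Ch^2(1+\delta^{-2})$ from \eqref{eq3.9}, the $h^{-1}$ factors are exactly balanced by the $O(h)$ rates, giving $\sup_t\mathbb{E}[\|\partial_x\xi^\delta\|^2]\le C(1+\delta^{-2})$ and $\delta\,\mathbb{E}[\int_0^T\|\partial_h^2\xi^\delta\|^2]\le C(1+\delta^{-2})$. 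The triangle inequality then yields \eqref{prop3.7a}.

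For the H\"older estimate \eqref{prop3.7b}, I would fix $s$, set $v(\zeta):=u_h^\delta(\zeta)-u_h^\delta(s)$ (so $v(s)=0$), and apply It\^o's formula to $\zeta\mapsto\|v(\zeta)\|_{L^2(I)}^2$ on $[s,t]$ via \eqref{eq3.1b}, treating $u_h^\delta(s)$ as the $\mathcal{F}_s$-measurable constant it is. Taking expectations kills the martingale and produces the coercive term $2(\delta+\frac{\epsilon^2}{2})\mathbb{E}[\int_s^t\|\partial_x v\|^2]$ on the left. On the right there are three contributions to control: the It\^o correction $\epsilon^2\mathbb{E}[\int_s^t\|P_h^r\sqrt{1+|\partial_x u_h^\delta|^2}\|^2\,d\zeta]$, bounded by $\epsilon^2(1+\sup_\zeta\mathbb{E}[\|\partial_x u_h^\delta\|^2])|t-s|$ since $P_h^r$ contracts the $L^2$-norm, invoking \eqref{prop3.7a}; the linear cross term $(\partial_x v,\partial_x u_h^\delta(s))$ coming from $\partial_x u_h^\delta=\partial_x v+\partial_x u_h^\delta(s)$, handled by Young's inequality together with \eqref{prop3.7a}; and the nonlinear drift $(\arctan(\partial_x u_h^\delta),\partial_x v)$. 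Absorbing the resulting $\|\partial_x v\|^2$ contributions into the coercive term and retaining $\frac{\delta}{2}\mathbb{E}[\int_s^t\|\partial_x v\|^2]$ on the left then gives \eqref{prop3.7b}.

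The hard part throughout is the low global regularity of $V_r^h$-functions. In \eqref{prop3.7a} it forbids the discrete It\^o energy argument, which the perturbation splitting circumvents by balancing inverse estimates against the approximation rates in \eqref{eq3.9}. In \eqref{prop3.7b} the decisive point is to prevent the nonlinear drift from generating a mesh-dependent ($h^{-1}$) constant: rather than estimating $\|A_h^\delta u_h^\delta\|_{L^2(I)}$ (which would force an inverse inequality on $\arctan(\partial_x u_h^\delta)$), I would keep the arctan term tested against $v$, so that it pairs with $\partial_x v$ and can be bounded using only $\|\arctan(\partial_x u_h^\delta)\|_{L^2(I)}\le\frac{\pi}{2}$ and Young's inequality before being absorbed into the dissipation --- yielding an $h$-independent constant at the expense of the stated negative powers of $\delta$.
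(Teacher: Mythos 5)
Your proposal is correct and follows the paper's own proof essentially step for step: the same splitting $u_h^\delta = R_h^r u^\delta + \xi^\delta$, with $H^1$-stability of the Ritz projection and the bound \eqref{3.18} for $\partial_h^2 R_h^r u^\delta$ (your identity $\partial_h^2 R_h^r w = P_h^r \partial_x^2 w + P_h^r(R_h^r w - w)$ is precisely what underlies it), inverse estimates balanced against \eqref{eq3.9} for the $\xi^\delta$ piece, and the same It\^o argument applied to $\zeta \mapsto \Vert u_h^\delta(\zeta)-u_h^\delta(s)\Vert_{L^2(I)}^2$ for the H\"older estimate. The only (harmless) deviations are in \eqref{prop3.7b}, where the paper inserts $\pm u_h^\delta(s)$ inside the It\^o correction and $\pm\arctan(\partial_x u_h^\delta(s))$ in the drift and absorbs the resulting gradient terms into the dissipation, whereas you bound the It\^o correction directly by $\sup_\zeta \mathbb{E}\bigl[\Vert \partial_x u_h^\delta(\zeta)\Vert_{L^2(I)}^2\bigr]\,\vert t-s\vert$ via \eqref{prop3.7a} and the arctan term via $\Vert\arctan(\partial_x u_h^\delta)\Vert_{L^2(I)}\le \pi/2$ --- a slightly cleaner bookkeeping yielding the same $C(1+\delta^{-3})\vert t-s\vert$ bound.
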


\begin{proof}
Notice that $u_h^\delta=\xi^\delta +R_h^r u^\delta$.  By the $H^1$-stability 
of $R_h^r$,  an inverse inequality, (\ref{e2.6}), and (\ref{eq3.9}) we get
\begin{align*}
\sup_{t \in [0,T]} {\mathbb E} \Bigl[ \Vert \partial_x u_h^{\delta} (t) \Vert^2_{L^2(I)}\Bigr]
&\leq 2\sup_{t \in [0,T]} {\mathbb E} \Bigl[ \Vert \partial_x R_h^r u^{\delta} (t) \Vert^2_{L^2(I)}\Bigr] +
2 \sup_{t \in [0,T]} {\mathbb E} \Bigl[ \Vert \p_x  \xi^{\delta} (t) \Vert^2_{L^2(I)}\Bigr] \\
&\leq C\sup_{t \in [0,T]} {\mathbb E} \Bigl[ \Vert \partial_x u^{\delta} (t) \Vert^2_{L^2(I)}\Bigr] +
\frac{C}{h^2} \sup_{t \in [0,T]} {\mathbb E} \Bigl[ \Vert  \xi^{\delta} (t) \Vert^2_{L^2(I)}\Bigr] \\
&\leq C (1+\delta^{-2}).
\end{align*}
It follows from (\ref{prop3.6b}) and (\ref{eq3.3}) that
\begin{align*}
\Vert \partial^2_h R_h^r w\Vert^2_{L^2(I)} 
&= -\bigl( \partial_x \partial^2_h R_h^r w , \partial_x R_h^r w\bigr)_{I}\\
&= \bigl( \partial^2_h R_h^r w, \partial_x^2 w\bigr)_I 
+\bigl(w-R_h^r w, \p_h^2R_h^r w\bigr)_I
\qquad \forall w \in H^2(I), 
\end{align*}
and hence
\begin{equation}\label{3.18}
\Vert \partial^2_h R_h^r w\Vert_{L^2(I)} \leq \Vert \partial_x^2 w\Vert_{L^2(I)}
+\|w-R_h^r w\|_{L^2(I)} \leq (1+Ch^2) \|w\|_{H^2(I)}.  
\end{equation}
By an inverse estimate, (\ref{3.18}), and (\ref{eq3.9})  we have
\begin{align*}
{\mathbb E} \Bigl[ \int_0^T \Vert \partial^2_h u_h^{\delta}(s)\Vert^2_{L^2(I)}\, ds\Bigr]
 &\leq 2{\mathbb E} \Bigl[ \int_0^T \Bigl(\Vert \partial^2_h \xi^{\delta}(s)\Vert^2_{L^2(I)}
 + \Vert \partial^2_h R_h^r u^{\delta}(s)\Vert^2_{L^2(I)}\Bigr) ds\Bigr] \\
 &\leq 2{\mathbb E} \Bigl[ \int_0^T \Bigl(Ch^{-2} \Vert \partial_x \xi^{\delta}(s)\Vert^2_{L^2(I)}
 +C\Vert \partial^2_x u^{\delta}(s)\Vert^2_{L^2(I)}\Bigr) ds\Bigr] \\
 &\leq C \delta^{-1} (1 + \delta^{-2}) 
+C{\mathbb E} \Bigl[\int_0^T\Vert \p_x^2 u^\delta(s) \Vert^2_{L^2(I)}\, ds\Bigr],
\end{align*}
which and  (\ref{e2.6}) give the desired bound in (\ref{prop3.7a}).

To show (\ref{prop3.7b}), we fix $ s\geq 0$ and apply Ito's formula 
to $f(u_h^\delta) = \Vert u_h^\delta(t) - u^{\delta}_h(s)\Vert^2_{L^2(I)}$
to get that for some $\{ {\mathcal F}_t;\, t \in [s,T]\}$-martingale $M_t$
\begin{align}\label{4.9a}
&\Vert u^\delta_h(t) - u^\delta_h(s)\Vert^2_{L^2(I)} \\ \nonumber
&\hskip -0.2in
=- (\epsilon^2 + 2\delta) \int_s^t \Bigl( \partial_x [u_h^{\delta}(\zeta)\pm u^{\delta}_h(s)], \partial_x[u^\delta_h(\zeta) - u^{\delta}_h(s)]\Bigr)_I\, d\zeta \\
& - (2-\epsilon^2) \int_s^t \Bigl( {\rm arctan} \bigl( \partial_x u_h^{\delta}(\zeta)\bigr) \pm {\rm arctan}\bigl( \partial_x u_h^\delta(s)\bigr), \partial_x[u^\delta_h(\zeta) - u^\delta_h(s)]\Bigr)_I\, d\zeta \nonumber \\
&+ \epsilon^2 \int_s^t \bigl\Vert P_h^r \sqrt{1+ \vert \partial_x \bigl[u^\delta_h(\zeta) \pm u^\delta_h(s) \bigr]\vert^2} \bigr\Vert^2_{L^2(I)}\, d\zeta + M_t. \nonumber
\end{align}
By the $L^2$-stability of $P_h^r$, the triangle and Young's inequality,
we can bound the last term above as follows:
\begin{align*}
& \epsilon^2 \int_s^t \bigl\Vert P_h^r \sqrt{1+ \vert \partial_x \bigl[u^\delta_h(\zeta) \pm u^\delta_h(s) \bigr]\vert^2} \bigr\Vert^2_{L^2(I)}\, d\zeta \\
&\hskip 0.5in
 \leq \epsilon^2 \int_s^t \Bigl(\bigl\Vert \partial_x[u^\delta_h(\zeta) - u^\delta_h(s)]\bigr\Vert_{L^2(I)}
+ \bigl\Vert 1+ \vert \partial_x u^\delta_h(s)\vert \bigr\Vert_{L^2(I)}\Bigr)^2\, d\zeta  \nonumber  \\
&\hskip 0.5in
 \leq \epsilon^2(1 +\delta) \int_s^t \Vert \partial_x[u^\delta_h(\zeta) - u^\delta_h(s)]\bigr\Vert_{L^2(I)}^2\, d\zeta \nonumber \\
&\hskip 0.8in
 + \epsilon^2  (4+ \delta^{-1}) \Bigl( 1 + \Vert \partial_x u_h^{\delta}(s)\Vert_{L^2(I)}^2\Bigr) \, |t-s|. \nonumber
\end{align*}
Also
\begin{align*}
&(\epsilon^2 + 2\delta) \int_s^t \Bigl( \partial_x u^\delta_h(s), \partial_x[u_h^\delta(\zeta) - u^\partial_h(s)]\Bigr)_I\, d\zeta \\
&\quad \leq \frac{\delta}4  \int_s^t \Vert \partial_x[u^\delta_h(\zeta) - u^\delta_h(s)]\Vert^2_{L^2(I)}\, d\zeta
+ (\epsilon^2+ 2\delta)^2 \delta^{-1}  \, \vert t-s\vert \Vert \partial_x u_h^\delta(s)\Vert^2_{L^2(I)}, \\
&(2-\epsilon^2) \int_s^t \Bigl( {\rm arctan}\bigl( \partial_x u_h^\delta (s)\bigr), \partial_x[u_h^\delta(\zeta) - u^\partial_h(s)]\Bigr)_I\, d\zeta \\
&\quad \leq \frac{\delta}{4} \int_s^t \Vert \partial_x[u^\delta_h(\zeta) - u^\delta_h(s)]\Vert^2_{L^2(I)}\, d\zeta
+ 4 (2-\epsilon^2)^2 \delta^{-1}\, |t-s|.
\end{align*}
Substituting the above estimates into \eqref{4.9a} yields
\begin{eqnarray*}
&&\Vert u^\delta_h(t)- u_h^\delta(s)\Vert^2_{L^2(I)} + \frac{\delta}{2}
\int_s^t \Vert \partial_x[u^\delta_h(\zeta) - u^\delta_h(s)]\Vert^2\, d\zeta \\
&& \hskip 0.5in
 \leq C \delta^{-1} \Bigl((2-\epsilon^2)^2 + \epsilon^4 + \delta^2 \Bigr)
  \Bigl( 1 + \Vert \partial_x u^\delta_h(s)\Vert_{L^2(I)}^2\Bigr) \,
 | t-s| + M_t.
\end{eqnarray*}
Finally, (\ref{prop3.7b}) follows from applying the expectation to the 
above inequality and using (\ref{prop3.7a}).
\end{proof}

\subsection{Fully discrete finite element methods}\label{sec-3.2}
Let $t_n = n\tau$ for $n=0,1,\cdots,N$ be a uniform partition of $[0,T]$ 
with $\tau=T/N$. Our fully discrete finite element method for 
SPDE \eqref{e2.4} is defined by seeking an $\{\mathcal{F}_{t_n};
n=0,1,\cdots,N\}$-adapted $V_r^h$-valued process 
$\{u_h^n;\,  n=0,1,\cdots, N\}$ 
such that such that $\mathbb{P}$-almost surely
\begin{align}\label{e3.1}
&\bigl(u_h^{\delta, n+1}, v_h\bigr)_I  +\tau\bigl(\delta+\frac{\epsilon^2}2 \bigr) \bigl( \p_x u_h^{\delta,n+1}, \p_x v_h\bigr)_I\\
&\hskip 0.8in
+\tau\bigl( 1-\frac{\epsilon^2}2\bigr)  \bigl( \arctan(\p_x u_h^{\delta, n+1}), \p_x v_h \bigr)_I  \nonumber  \\
&\hskip 0.3in
= \bigl( u_h^{\delta,n}, v_h\bigr)_I  + \epsilon \Bigl( \sqrt{1+|\p_x u_h^{\delta,n}|^2}, v_h \Bigr)_I\, \Delta  W_{n+1}
\qquad\forall v_h\in V^h_r,  \nonumber \\
\hskip 0.5in
&u_h^{\delta,n+1}(0) = u_h^{\delta, n+1}(1),  \label{e3.1a}
\end{align}
where $\Delta W_{n+1}:= W(t_{n+1})-W(t_n) \sim {\mathcal N}(0,\tau)$.

We first establish the following stability estimate for $u_h^{\delta,n}$.

\begin{proposition}\label{prop3.3}
Let $\epsilon \leq \sqrt{2(1+\delta)}$.
For each $n=0,1,\cdots, N$, there is a $V_r^h$-valued discrete 
process $\{u_h^{\delta,n+1};\,  0 \leq n\leq N-1\}$
which solves scheme \eqref{e3.1}--\eqref{e3.1a}.  Moreover, there holds
\begin{align}\label{e3.2}
\max_{0\leq n\leq N} \mathbb{E} \Bigl[\bigl\| u_h^{\delta,n} 
\bigr\|_{L^2(I)}^2 \Bigr] + 2\delta \sum_{n=0}^{N} \tau 
\mathbb{E}\Bigl[ \bigl\| \p_x u_h^{\delta,n} \bigr\|_{L^2(I)}^2 \Bigr]
\leq   \mathbb{E}\Bigl[\bigl\| u_h^{\delta,0} \bigr\|_{L^2(I)}^2\Bigr] +\epsilon^2 T.
\end{align}
\end{proposition}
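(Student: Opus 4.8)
The plan is to prove solvability step by step and then to establish \eqref{e3.2} by a discrete energy estimate that parallels the proof of Proposition~\ref{prop3.1}; the one genuinely new feature is the explicit (forward) treatment of the gradient-type noise, which breaks the clean cancellation available in the semi-discrete case.

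For the solvability of one step I would freeze $u_h^{\delta,n}\in L^2(\Omega;V_r^h)$ and read \eqref{e3.1} as the equation $\bigl(I+\tau A_h^\delta\bigr)u_h^{\delta,n+1}=F_n$ in $V_r^h$, where $A_h^\delta$ is the operator \eqref{eq3.1a} and $F_n:=u_h^{\delta,n}+\epsilon P_h^r\bigl(\sqrt{1+|\p_x u_h^{\delta,n}|^2}\bigr)\Delta W_{n+1}\in V_r^h$. Using the monotonicity of $\arctan$, the Lipschitz bound $|\arctan(p)-\arctan(q)|\le|p-q|$, and $\epsilon\le\sqrt{2(1+\delta)}$, one checks that $I+\tau A_h^\delta$ is continuous, strongly monotone, and coercive on the finite-dimensional space $V_r^h$, so the Browder--Minty theorem gives a unique $u_h^{\delta,n+1}$. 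Since the solution map $F_n\mapsto u_h^{\delta,n+1}$ is continuous and $F_n$ is $\mathcal{F}_{t_{n+1}}$-measurable, the resulting process is adapted.

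To obtain \eqref{e3.2} I would test \eqref{e3.1} with $v_h=2u_h^{\delta,n+1}$ and invoke the identity $2(a-b,a)_I=\|a\|_{L^2(I)}^2-\|b\|_{L^2(I)}^2+\|a-b\|_{L^2(I)}^2$, which turns the left-hand side into
\[
\|u_h^{\delta,n+1}\|_{L^2(I)}^2-\|u_h^{\delta,n}\|_{L^2(I)}^2+\|u_h^{\delta,n+1}-u_h^{\delta,n}\|_{L^2(I)}^2+2\tau\bigl(A_h^\delta u_h^{\delta,n+1},u_h^{\delta,n+1}\bigr)_I .
\]
The drift is treated by coercivity. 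When $\epsilon^2\le2$ the $\arctan$ contribution in $\bigl(A_h^\delta u_h^{\delta,n+1},u_h^{\delta,n+1}\bigr)_I$ is nonnegative by monotonicity and may be discarded, leaving exactly $(2\delta+\epsilon^2)\tau\|\p_x u_h^{\delta,n+1}\|_{L^2(I)}^2$; I would read this as $2\delta\tau\|\p_x u_h^{\delta,n+1}\|_{L^2(I)}^2$, which must survive, plus a spare $\epsilon^2\tau\|\p_x u_h^{\delta,n+1}\|_{L^2(I)}^2$ reserved for the noise. For the borderline range $2<\epsilon^2\le2(1+\delta)$ the $\arctan$ coefficient changes sign, and one instead absorbs it into the linear diffusion via $\arctan(a)\,a\le a^2$ together with $\frac{\epsilon^2}2-1\le\delta$, which keeps the relevant terms on the correct side.

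The decisive step, and the point I expect to be the main obstacle, is the explicit noise term $2\epsilon\bigl(\sqrt{1+|\p_x u_h^{\delta,n}|^2},u_h^{\delta,n+1}\bigr)_I\Delta W_{n+1}$. Since the coefficient is evaluated at the old level $n$ while $u_h^{\delta,n+1}$ correlates with $\Delta W_{n+1}$ through the implicit solve, I would split $u_h^{\delta,n+1}=u_h^{\delta,n}+(u_h^{\delta,n+1}-u_h^{\delta,n})$. The $u_h^{\delta,n}$-part is $\mathcal{F}_{t_n}$-measurable and, as $\Delta W_{n+1}$ has mean zero and is independent of $\mathcal{F}_{t_n}$, drops out under expectation; the increment part is controlled by Young's inequality, its cross term being absorbed by the available $\|u_h^{\delta,n+1}-u_h^{\delta,n}\|_{L^2(I)}^2$, and using $\mathbb{E}[(\Delta W_{n+1})^2\mid\mathcal{F}_{t_n}]=\tau$ with $\|\sqrt{1+|\p_x u_h^{\delta,n}|^2}\|_{L^2(I)}^2=1+\|\p_x u_h^{\delta,n}\|_{L^2(I)}^2$ this leaves $\epsilon^2\tau\bigl(1+\mathbb{E}[\|\p_x u_h^{\delta,n}\|_{L^2(I)}^2]\bigr)$. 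Unlike the semi-discrete estimate of Proposition~\ref{prop3.1}, the resulting $\epsilon^2\tau\|\p_x u_h^{\delta,n}\|_{L^2(I)}^2$ sits at level $n$, whereas the spare diffusion $\epsilon^2\tau\|\p_x u_h^{\delta,n+1}\|_{L^2(I)}^2$ sits at level $n+1$, so no pointwise cancellation is available. I would therefore sum in $n$ and let $\sum_n\epsilon^2\tau\bigl(\|\p_x u_h^{\delta,n+1}\|_{L^2(I)}^2-\|\p_x u_h^{\delta,n}\|_{L^2(I)}^2\bigr)$ telescope; the surviving endpoint is nonnegative and can be discarded, $\sum_n\epsilon^2\tau\le\epsilon^2T$, the $2\delta$-pieces remain on the left, and taking the maximum over the endpoint index delivers \eqref{e3.2}.
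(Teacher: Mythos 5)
Your proposal follows essentially the same route as the paper: solvability of the implicit step from coercivity of $I+\tau A_h^\delta$ (the paper invokes Brouwer's fixed-point theorem, which in the finite-dimensional setting is interchangeable with your Browder--Minty argument), then testing with $u_h^{\delta,n+1}$, the same energy identity, the same splitting of the noise term into the $\mathcal{F}_{t_n}$-measurable part (killed in expectation by the mean-zero increment and the tower property) plus the increment part (cross term absorbed into $\frac12\|u_h^{\delta,n+1}-u_h^{\delta,n}\|_{L^2(I)}^2$), and finally the same telescoping of the level-mismatched quantity $\epsilon^2\tau\bigl(\|\p_x u_h^{\delta,n+1}\|_{L^2(I)}^2-\|\p_x u_h^{\delta,n}\|_{L^2(I)}^2\bigr)$, which is precisely how the paper arrives at its inequality before summation. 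Your remark on adaptedness via continuity of the solution map is a detail the paper leaves implicit.

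The one point where you diverge is the borderline range $2<\epsilon^2\le 2(1+\delta)$, and here your extra care exposes a real issue --- the paper simply drops the term $\tau\bigl(1-\frac{\epsilon^2}{2}\bigr)\bigl(\arctan(\p_x u_h^{\delta,n+1}),\p_x u_h^{\delta,n+1}\bigr)_I$ as if its coefficient were nonnegative, which is only valid for $\epsilon^2\le 2$ --- but your patch as stated does not recover \eqref{e3.2} with the constant $2\delta$. Absorbing the negative arctan contribution via $\arctan(a)\,a\le a^2$ and $\frac{\epsilon^2}{2}-1\le\delta$ costs up to $2\delta\tau\|\p_x u_h^{\delta,n+1}\|_{L^2(I)}^2$, which is exactly the surplus you must keep for the gradient sum on the left; what survives after reserving $\epsilon^2\tau\|\p_x u_h^{\delta,n+1}\|_{L^2(I)}^2$ for the telescoping is $\tau\bigl(2+2\delta-\epsilon^2\bigr)\|\p_x u_h^{\delta,n+1}\|_{L^2(I)}^2$, which degenerates to zero as $\epsilon^2\to 2(1+\delta)$. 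A sharper absorption, e.g. $\arctan(a)\,a\le\frac{\pi}{2}|a|$ combined with Young's inequality on the unit interval, salvages a gradient sum with constant $(2-\theta)\delta$ at the price of an additional $C\delta T$ on the right, but the clean statement \eqref{e3.2} is then only obtained for $\epsilon^2\le 2$ --- a defect your write-up shares with, and indeed inherits from, the paper. Likewise, both you and the paper silently discard the $n=0$ leftovers of the telescoping (the term $-\epsilon^2\tau\,\mathbb{E}\bigl[\|\p_x u_h^{\delta,0}\|_{L^2(I)}^2\bigr]$, and the fact that the telescoped gradient sum starts at $n=1$); these are harmless $O(\tau)$ data terms since $u_h^{\delta,0}=P_h^r u_0^\delta$ is bounded in $H^1(I)$, but strictly speaking they should be mentioned.
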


\begin{proof}
The existence of solutions to scheme \eqref{e3.1}--\eqref{e3.1a} 
for  $\tau,h > 0$ can be proved by Brouwer's fixed-point theorem, 
which uses the coercivity of the operator $I + \tau A_h^\delta$ 
(see ~\eqref{eq3.1a}).

To show \eqref{e3.2}, we choose $v_h=u_h^{\delta,n+1}(\omega)$ 
in \eqref{e3.1} to find $\mathbb{P}$-almost surely
\begin{align}\label{e3.3}
&\frac12 \Bigl[  \bigl\| u_h^{\delta,n+1} \bigr\|_{L^2(I)}^2 -\bigl\| u_h^{\delta,n} \bigr\|_{L^2(I)}^2 \Bigr]
+ \frac12 \bigl\| u_h^{\delta,n+1} - u_h^{\delta,n} \bigr\|_{L^2(I)}^2 \\
&\hskip 0.2in
+ \tau\bigl( \delta+\frac{\epsilon^2}2\bigr) \bigl\| \p_x u_h^{\delta,n+1} \bigr\|_{L^2(I)}^2
+ \tau\bigl( 1-\frac{\epsilon^2}2\bigr)  \bigl( \arctan(\p_x u_h^{\delta, n+1}), \p_x u_h^{\delta, n+1} \bigr)_I  \nonumber \\
&\hskip 0.1in
=\epsilon \Bigl( \sqrt{1+|\p_x u_h^{\delta,n}|^2}, u_h^{\delta,n} + u_h^{\delta,n+1}-u_h^{\delta,n} \Bigr)_I\, \Delta  W_{n+1}. \nonumber
\end{align}
We compute
\begin{align*}
& \bigl( \arctan(\p_x u_h^{\delta, n+1}), \p_x u_h^{\delta, n+1} \bigr)_I \geq 0,\\
&\epsilon \Bigl( \sqrt{1+|\p_x u_h^{\delta,n}|^2},  u_h^{\delta,n+1}-u_h^{\delta,n} \Bigr)_I\, \Delta  W_{n+1} \\
&\hskip 0.5in
\leq \frac12 \bigl\| u_h^{\delta,n+1} - u_h^{\delta,n} \bigr\|_{L^2(I)}^2
+ \frac{\epsilon^2}{2} \Bigl\|  \sqrt{1+ |\p_x u_h^{\delta,n}|^2} \Bigr\|_{L^2(I)}^2 |\Delta W_{n+1}|^2.
\end{align*}
By the tower property for expectations, there holds
\begin{equation*}
\frac{\epsilon^2}{2} {\mathbb E} \Bigl[ \Bigl\| 
\sqrt{1+ \vert \partial_x u_h^{\delta,n}\vert^2} \Bigr\|_{L^2(I)}^2
{\mathbb E}\bigl[\vert \Delta W_{n+1}\vert^2\vert{\mathcal F}_{t_n}\bigr] \Bigr]
= \frac{\epsilon^2}{2} \tau \, {\mathbb E} \Bigl[ 1 + \Vert \partial_x u^{\delta,n}\Vert^2_{L^2(I)}\Bigr],
\end{equation*}
such that we get
\begin{eqnarray}\label{e3.4}
&&\frac12 \mathbb{E}\, \Bigl[  \bigl\| u_h^{\delta,n+1} \bigr\|_{L^2(I)}^2 -\bigl\| u_h^{\delta,n} \bigr\|_{L^2(I)}^2 \Bigr]
+ \tau \delta \, \mathbb{E} \Bigl[\bigl\| \p_x u_h^{\delta,n+1} \bigr\|_{L^2(I)}^2 \Bigr] \\ \nonumber
&&\qquad + \frac{\epsilon^2}{2} \tau\, {\mathbb E} \Bigl[ \bigl\| \partial_x u_h^{\delta,n+1} \bigr\|_{L^2(I)}^2 -\bigl\| \partial_x u_h^{\delta,n} \bigr\|_{L^2(I)}^2\Bigr]
 \leq \epsilon^2 \tau.
\end{eqnarray}
After summation, we arrive at
\begin{align*}
\max_{0\leq n\leq N} \mathbb{E} \bigl[  \bigl\| u_h^{\delta,n} 
\bigr\|_{L^2(I)}^2 \bigr] + 2\delta \tau \sum_{n=0}^{N}  \mathbb{E} 
\Bigl[ \bigl\| \p_x u_h^{\delta,n} \bigr\|_{L^2(I)}^2 \Bigr]
\leq \mathbb{E}\Bigl[\bigl\| u_h^{\delta,0} \bigr\|_{L^2(I)}^2\Bigr] +\epsilon^2 T.
\end{align*}
So \eqref{e3.2} holds.  The proof is complete.
\end{proof}

Next, we derive an error bound for $u_h^\delta(t_n)-u_h^{\delta,n}$.

\begin{theorem}\label{thm3.2}
There holds the following error estimate:
\begin{align}\label{e3.5}
\sup_{0\leq n\leq N}  \mathbb{E}  \Bigl[\bigl\| u_h^\delta(t_n) -  u_h^{\delta,n} \bigr\|_{L^2(I)}^2 \Bigr]
&+ \delta \, \mathbb{E} \Bigl[\sum_{n=0}^N \tau \bigl\|\p_x u_h^\delta(t_n) - \p_x u_h^{\delta,n} \bigr\|_{L^2(I)}^2 \Bigr] \\
&\leq CT\bigl(1+ \delta^{-2} \bigr) h^{-2} \tau.  \nonumber
\end{align}
\end{theorem}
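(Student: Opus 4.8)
The plan is to run a standard time-discretization energy argument for the recursion \eqref{e3.1}, comparing it against the semi-discrete process $u_h^\delta$ on each subinterval $[t_n,t_{n+1}]$. I set $E^n := u_h^\delta(t_n) - u_h^{\delta,n}$ and note $E^0 = 0$, since both start from $u_h^\delta(0)$. First I would integrate the semi-discrete equation \eqref{eq3.1b} over $[t_n,t_{n+1}]$ and subtract \eqref{e3.1}; writing $\Delta W_{n+1} = \int_{t_n}^{t_{n+1}} dW_s$ and using that the explicit integrand $\sqrt{1+|\p_x u_h^{\delta,n}|^2}$ is $\mathcal{F}_{t_n}$-measurable, the difference of the two It\^o integrals collapses into a single stochastic integral of $\sqrt{1+|\p_x u_h^\delta(s)|^2} - \sqrt{1+|\p_x u_h^{\delta,n}|^2}$. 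This yields an error identity for $E^{n+1} - E^n$ tested against any $v_h \in V_r^h$.

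Next I would test with $v_h = E^{n+1}$ and use $2(E^{n+1}-E^n, E^{n+1})_I = \|E^{n+1}\|_{L^2(I)}^2 - \|E^n\|_{L^2(I)}^2 + \|E^{n+1}-E^n\|_{L^2(I)}^2$. The implicit drift contributes the coercive term $(2\delta+\epsilon^2)\tau\|\p_x E^{n+1}\|_{L^2(I)}^2$ plus a nonnegative arctan-monotonicity contribution, while the drift consistency error, from replacing $\int_{t_n}^{t_{n+1}} A_h^\delta u_h^\delta(s)\,ds$ by $\tau A_h^\delta u_h^\delta(t_{n+1})$, involves only the time increment $u_h^\delta(s) - u_h^\delta(t_{n+1})$ through both its linear and its (Lipschitz) arctan parts. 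Taking expectation, the stochastic integral paired with the $\mathcal{F}_{t_n}$-measurable $E^n$ has mean zero by adaptedness, while its pairing with the stochastic part of $E^{n+1}-E^n$, combined with the $\|E^{n+1}-E^n\|^2$ term from the identity, produces via It\^o isometry the quadratic-variation contribution $\epsilon^2\,\mathbb{E}\big[\int_{t_n}^{t_{n+1}}\|P_h^r[\sqrt{1+|\p_x u_h^\delta(s)|^2} - \sqrt{1+|\p_x u_h^{\delta,n}|^2}]\|_{L^2(I)}^2\,ds\big]$.

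The heart of the argument is this gradient-type noise term. By $L^2$-stability of $P_h^r$ and $(\sqrt{1+a^2}-\sqrt{1+b^2})^2 \le |a-b|^2$, I would split the increment as $\p_x[u_h^\delta(s)-u_h^\delta(t_n)] + \p_x E^n$. The error part, after summation over $n$ and using $E^0=0$, contributes $\epsilon^2\sum_n \tau\|\p_x E^n\|^2$, which is absorbed by the $\epsilon^2$-portion of the accumulated coercivity $(2\delta+\epsilon^2)\sum_n\tau\|\p_x E^{n+1}\|^2$; the telescoping then leaves exactly the net dissipation $\delta\sum_n\tau\|\p_x E^n\|^2$ appearing on the left of \eqref{e3.5} (a Young split with a small parameter keeps the coefficient of $\|\p_x E^n\|^2$ below $2\delta+\epsilon^2$). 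All remaining time-regularity pieces, from the drift consistency and from the noise, reduce to bounding $\mathbb{E}[\int_{t_n}^{t_{n+1}}\|\p_x[u_h^\delta(s)-u_h^\delta(t_n)]\|^2\,ds]$ and its analogue at $t_{n+1}$. Since no $H^1$-H\"older estimate in time is available, I would apply the inverse inequality $\|\p_x w_h\| \le Ch^{-1}\|w_h\|$ on $V_r^h$ to convert these into $Ch^{-2}\,\mathbb{E}[\int_{t_n}^{t_{n+1}}\|u_h^\delta(s)-u_h^\delta(t_n)\|^2\,ds]$, and then invoke the $L^2$-H\"older estimate \eqref{prop3.7b}, which supplies a factor $\tau$ per subinterval together with the relevant negative power of $\delta$; summing the $N=T/\tau$ contributions produces the $h^{-2}\tau$ scaling of \eqref{e3.5}.

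Finally I would collect everything into a recursion $\mathbb{E}[\|E^{n+1}\|^2] + \delta\tau\|\p_x E^{n+1}\|^2 \le \mathbb{E}[\|E^n\|^2] + (\text{consistency terms})$ and close it by discrete Gronwall, using $E^0=0$, to reach \eqref{e3.5}. The main obstacle is precisely the gradient noise: one must simultaneously balance the $\p_x E^n$ contribution against the It\^o-correction diffusion with sharp enough constants that a net $\delta$-dissipation survives, and compensate for the missing time regularity of $\p_x u_h^\delta$ by spending the inverse estimate. This trade-off is what forces the suboptimal $h^{-2}$ factor and is exactly why the spatial discretization must be carried out before the temporal one.
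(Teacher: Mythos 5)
Your proposal is correct and follows essentially the same route as the paper's proof of Theorem~\ref{thm3.2}: the same error recursion obtained by subtracting \eqref{e3.1} from the time-integrated semi-discrete equation, testing with the new error, splitting the stochastic pairing into the $\mathcal{F}_{t_n}$-measurable part (mean zero) and the increment part (It\^o isometry plus $(\sqrt{1+x^2}-\sqrt{1+y^2})^2\le(x-y)^2$), absorbing the $\epsilon^2$ quadratic-variation gradient term into the implicit coercivity so that a net $O(\delta)$ dissipation survives after telescoping, and converting all time-increment terms via the inverse inequality and the $L^2$-H\"older estimate \eqref{prop3.7b}. The only cosmetic difference is that you invoke a discrete Gronwall argument at the end, whereas the paper closes by direct summation and absorption (no $\|E^n\|_{L^2}^2$ terms remain on the right-hand side), which avoids an exponential factor but changes nothing substantive.
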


\begin{proof}
Let $e^{\delta,n}:=u_h^\delta(t_n)-u_h^{\delta,n}$.  It follows 
from \eqref{eq3.1} that for all $\{ t_n; n \geq 0\}$ there 
holds ${\mathbb P}$-almost surely
\begin{align}\label{e3.6}
&\bigl( u_h^\delta(t_{n+1}), v_h \bigr)_I - \bigl( u_h^\delta(t_n), v_h \bigr)_I  \\
&\hskip 0.9in
= -\bigl(\delta+\frac{\epsilon^2}2\bigr)  \int_{t_n}^{t_{n+1}}  \bigl( \p_x u_h^\delta(s), \p_x v_h \bigr)_I ds \nonumber   \\
 &\hskip 1.2in
 -\bigl( 1-\frac{\epsilon^2}2\bigr) \int_{t_n}^{t_{n+1}} \bigl( \arctan(\p_x u_h^\delta(s)), \p_x v_h \bigr)_I  ds  \nonumber \\
 &\hskip 1.2in
 + \epsilon \int_{t_n}^{t_{n+1}}   \Bigl( \sqrt{1+|\p_x u_h^\delta(s)|^2},  v_h\Bigr)_I   dW_s
 \qquad \forall v_h\in V_r^h .   \nonumber
\end{align}

Subtracting \eqref{e3.1} from \eqref{e3.6}  yields the following 
error equation:
\begin{align}\label{e3.7}
&\bigl( e^{\delta,n+1}, v_h \bigr)_I - \bigl( e^{\delta, n}, v_h \bigr)_I  \\
&\hskip 0.2in
= -\bigl(\delta+\frac{\epsilon^2}2\bigr)  \int_{t_n}^{t_{n+1}}  \bigl( \p_x u_h^\delta(s)-\p_x u_h^{\delta,n+1}, \p_x v_h \bigr)_I ds
 \nonumber \\
 &\hskip 0.4in
 -\bigl( 1-\frac{\epsilon^2}2\bigr)  \int_{t_n}^{t_{n+1}} \Bigl( \arctan \bigl(\p_x u_h^\delta(s) \bigr)-\arctan(\p_x u_h^{\delta,n+1}), \p_x v_h \Bigr)_I  ds
 \nonumber \\
 &\hskip 0.4in
 + \epsilon \int_{t_n}^{t_{n+1}}   \Bigl( \sqrt{1+|\p_x u_h^\delta(s)|^2} - \sqrt{1+|\p_x u_h^{\delta, n}|^2},  v_h\Bigr)_I   dW_s . \nonumber
\end{align}
Choosing $v_h=e^{\delta,n+1}(\omega)$ in \eqref{e3.7} 
leads to $\mathbb{P}$-almost surely
\begin{align}\label{e3.8}
&\frac12\bigl[ \| e^{\delta,n+1} \|_{L^2(I)}^2 - \| e^{\delta,n} \|_{L^2(I)}^2 \bigr]
+\frac12 \bigl\| e^{\delta,n+1}  -  e^{\delta,n} \bigr\|_{L^2(I)}^2 \\
&\hskip 1in
+ \bigl(\frac{\epsilon^2}2 + \delta\bigr) \tau\, \Vert \p_x e^{\delta,n+1}\Vert^2_{L^2(I)} \nonumber \\
&\hskip 0.1in
= -  \bigl(\frac{\epsilon^2}2+\delta\bigr) \int_{t_n}^{t_{n+1}}
 \Bigl( \p_x u_h^\delta(s)- \p_x u_h^\delta(t_{n+1}) , \p_x e^{\delta,n+1} \Bigr)_I ds  \nonumber \\
 &\hskip 0.25in
 -\bigl( 1-\frac{\epsilon^2}2\bigr) \int_{t_n}^{t_{n+1}} \Bigl( \arctan \bigl(\p_x u_h^\delta(s) \bigr)
 \pm {\rm arctan} \bigl( \partial_x u_h^{\delta}(t_{n+1})\bigr) \nonumber \\
 &\hskip 2.4in -\arctan(\p_x u_h^{\delta,n+1}), \p_x e^{\delta, n+1} \Bigr)_I ds  \nonumber \\
 &\hskip 0.25in
 + \epsilon \int_{t_n}^{t_{n+1}}   \Bigl( \sqrt{1+|\p_x u_h^\delta(s)|^2} - \sqrt{1+|\p_x u_h^{\delta, n}|^2},  e^{\delta, n+1}\pm e^{\delta,n} \Bigr)_I   dW_s.\nonumber
\end{align}

We now bound each term on the right-hand side.  First, since 
${\mathbb E}[\Delta W_{n+1}] = 0$, by Ito's isometry, the inequality 
$\bigl(\sqrt{1+x^2} - \sqrt{1+y^2} \bigr)^2 \leq (x-y)^2$, and the inverse 
inequality we get 
\begin{align} \nonumber
&\mathbb{E}\Bigl[ \epsilon \int_{t_n}^{t_{n+1}}   \Bigl( \sqrt{1+|\p_x u_h^\delta(s)|^2} - \sqrt{1+|\p_x u_h^{\delta, n}|^2},  e^{\delta, n+1}\pm e^{\delta,n} \Bigr)_I   dW_s \Bigr]  \\
&\leq {\mathbb E} \Bigl[ \frac12\Vert e^{\delta,n+1} - e^{\delta,n}\Vert^2_{L^2(I)}\Bigr]
+ \frac{\epsilon^2}{2} \int_{t_n}^{t_{n+1}}  \Vert \partial_x[u_h^{\delta}(s) - u_h^{\delta,n} \pm u_h^\delta(t_n)]\Vert^2_{L^2(I)} \, ds\Bigr]
\nonumber \\
&\leq \frac{1}{2} {\mathbb E} \Bigl[ \Vert e^{\delta,n+1} - e^{\delta,n}\Vert^2_{L^2(I)}\Bigr]
+{\mathbb E} \Bigl[ \bigl( \frac{\epsilon^2}{2} + \frac{\delta}{2} \bigr)\tau\, 
\Vert \partial_x e^{\delta,n}\Vert^2_{L^2(I)} \nonumber\\ 
&\hskip 0.5in
+ \bigl(\frac{\epsilon^2}{2} + \frac{2}{\delta} \bigr)  \int_{t_n}^{t_{n+1}}  \Vert \partial_x[u_h^{\delta}(s) -  u_h^\delta(t_n)]\Vert^2_{L^2(I)} \, ds\Bigr]
\nonumber \\
&\leq \frac{1}{2} {\mathbb E} \Bigl[ \Vert e^{\delta,n+1} - e^{\delta,n}\Vert^2_{L^2(I)}\Bigr]
+\bigl( \frac{\epsilon^2}{2} + \frac{\delta}{2} \bigr) \tau \, {\mathbb E} \Bigl[  \Vert \partial_x e^{\delta,n}\Vert^2_{L^2(I)}\Bigr]\nonumber  \\
&\hskip 0.5in
+  C\bigl( 1+\delta^{-1}  \bigr) h^{-2} \, \mathbb{E} \Bigl[  \int_{t_n}^{t_{n+1}}  \Vert u_h^{\delta}(s) -  u_h^\delta(t_n)\Vert^2_{L^2(I)} \, ds\Bigr].\label{star_1}
\end{align}
An elementary calculation and an application of an inverse inequality yield
\begin{align}\nonumber
 &\bigl(\frac{\epsilon^2}2+\delta\bigr) \int_{t_n}^{t_{n+1}}
 \Bigl( \p_x u_h^\delta(s)- \p_x u_h^\delta(t_{n+1}) , \p_x e^{\delta,n+1} \Bigr)_I ds \\
 &\leq \frac{\delta}{8} \tau\, \Vert \partial_x e^{\delta,n+1}\Vert^2_{L^2(I)}
 + \frac{2 (\frac{\epsilon^2}{2} + \delta)^2}{\delta} \int_{t_n}^{t_{n+1}} \Vert \partial_x[u^\delta_h(s) - u^\delta_h(t_{n+1})]\Vert^2_{L^2(I)}\, ds\nonumber\\
 &\leq \frac{\delta}{8} \tau\, \Vert \partial_x e^{\delta,n+1}\Vert^2_{L^2(I)}
 +(\epsilon^2+2\delta)^2 \delta^{-1} h^{-2}  \int_{t_n}^{t_{n+1}}  \Vert u_h^{\delta}(s) -  u_h^\delta(t_n)\Vert^2_{L^2(I)} \, ds. \label{star_2}
\end{align}
By the monotonicity of $\arctan$ we get
\begin{align}
 &-\bigl( 1-\frac{\epsilon^2}2\bigr) \int_{t_n}^{t_{n+1}} \Bigl( \arctan \bigl(\p_x u_h^\delta(s) \bigr)
 \pm {\rm arctan} \bigl( \partial_x u_h^{\delta}(t_{n+1})\bigr) \nonumber \\
 &\hskip 1.5in -\arctan(\p_x u_h^{\delta,n+1}), \p_x e^{\delta, n+1} \Bigr)_I ds
\nonumber  \\
 &\leq -\bigl( 1-\frac{\epsilon^2}2\bigr) \int_{t_n}^{t_{n+1}} \Bigl( \arctan \bigl(\p_x u_h^\delta(s) \bigr)
 - {\rm arctan} \bigl( \partial_x u_h^{\delta}(t_{n+1})\bigr), \p_x e^{\delta,n+1} \Bigr) \, ds \nonumber\\
 &\leq  \frac{\delta}{8} \tau\, \Vert \partial_x e^{\delta,n+1}\Vert^2_{L^2(I)}
 + \frac{2}{\delta}(1-\frac{\epsilon^2}{2})^2 \int_{t_n}^{t_{n+1}} \Vert \partial_x[u^\delta_h(s) - u^\delta_h(t_{n+1})]\Vert^2_{L^2(I)}\, ds \nonumber\\
 &\leq \frac{\delta}{8} \tau\, \Vert \partial_x e^{\delta,n+1}\Vert^2_{L^2(I)}
 + (2-\epsilon^2)^2  \delta^{-1} h^{-2}  \int_{t_n}^{t_{n+1}}  \Vert u_h^{\delta}(s) -  u_h^\delta(t_n)\Vert^2_{L^2(I)} \, ds. \label{star_3}
\end{align}

Finally, substituting the above estimates into (\ref{e3.8}), 
summing over $n=0,1, 2, \cdots, N-1$, and using \eqref{prop3.7b} and
 the fact that $e^{\delta,0} = 0$ we get
\begin{align*}
&\sup_{0 \leq n \leq N} {\mathbb E}\Bigl[ \Vert e^{\delta,n}\Vert^2_{L^2(I)}\Bigr]  + \frac{\delta}{2} {\mathbb E} \Bigl[ \tau
\sum_{n=0}^N \Vert \partial_x e^{\delta,n+1}\Vert^2_{L^2(I)} \Bigr]  \\
&\hskip 0.5in
\leq C\bigl( 1+\delta^{-1}\bigr) h^{-2} \sum_{n=0}^N \int_{t_n}^{t_{n+1}} \mathbb{E}\Bigl[  \sup_{s\in [t_n, t_{n+1}]}  \Vert u_h^{\delta}(s) -  u_h^\delta(t_n)\Vert^2_{L^2(I)} \Bigr] \, ds\\
&\hskip 0.5in
 \leq  CT(1+\delta^{-1})^2 h^{-2} \tau,
\end{align*}
which infers \eqref{e3.5}. The proof is complete.
\end{proof}

We conclude this section by stating the following error estimates  
for the fully discrete finite element solution $u_h^{\delta,n}$ as an 
approximation to the solution of the original mean curvature flow 
equation \eqref{e1.3}.  

\begin{theorem}\label{thm3.3}
Let $u$ and $u_h^{\delta, n}$ denote respectively the solutions 
of SPDE \eqref{e1.3} and scheme \eqref{e3.1}--\eqref{e3.1a}. Under assumptions 
of Theorems \ref{thm2.1}, \ref{thm3.1}, and  \ref{thm3.2},  there 
holds the following error estimate: 
\begin{align}\label{e3.30}
&\sup_{0\leq n\leq N}  \mathbb{E}  \Bigl[\bigl\| u(t_n) 
- u_h^{\delta,n} \bigr\|_{L^2(I)}^2 \Bigr]
+\delta\, \mathbb{E} \Bigl[\sum_{n=0}^N \tau \bigl\|\p_x u(t_n) 
-\p_x u_h^{\delta,n} \bigr\|_{L^2(I)}^2 \Bigr] \\
&\qquad
\leq CT\delta + C\bigl(1+\delta^{-2} \bigr) h^2
+ CT\bigl(1+ \delta^{-2}  \bigr) h^{-2} \tau.  \nonumber
\end{align}
\end{theorem}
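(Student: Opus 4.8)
The plan is to prove \eqref{e3.30} by assembling the three error bounds already established, namely the regularization estimate \eqref{e2.8} of Theorem~\ref{thm2.3}, the spatial semidiscretization estimate \eqref{eq3.5a} of Theorem~\ref{thm3.1}, and the temporal discretization estimate \eqref{e3.5} of Theorem~\ref{thm3.2}. The mechanism is a triangle-inequality decomposition of the global error at each grid point $t_n$:
\begin{align*}
u(t_n) - u_h^{\delta,n} = \bigl[ u(t_n) - u^\delta(t_n)\bigr] + \bigl[ u^\delta(t_n) - u_h^\delta(t_n)\bigr] + \bigl[ u_h^\delta(t_n) - u_h^{\delta,n}\bigr].
\end{align*}
Applying the elementary bound $(a+b+c)^2 \le 3(a^2+b^2+c^2)$ inside the expectation (the factor $3$ being absorbed into the generic constant $C$), the left-hand side of \eqref{e3.30} splits into a regularization contribution, a spatial contribution, and a temporal contribution, each matched to one of the three prior theorems.

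First I would dispose of the $L^2$-in-space supremum term. Since $\sup_{0\le n\le N}\mathbb{E}[\|\cdot\|_{L^2(I)}^2] \le \sup_{t\in[0,T]}\mathbb{E}[\|\cdot\|_{L^2(I)}^2]$ for the first two brackets, \eqref{e2.8} contributes $CT\delta$ and \eqref{eq3.5a} contributes $C(1+\delta^{-2})h^2$, while the third bracket is exactly the discrete quantity bounded in \eqref{e3.5} by $CT(1+\delta^{-2})h^{-2}\tau$. Adding these three reproduces precisely the right-hand side of \eqref{e3.30}, so no cancellation or interaction between the three error sources is needed; the estimate is additive by design.

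The only delicate point, and the step I expect to require real care, is the weighted gradient term $\delta\,\mathbb{E}\bigl[\sum_{n=0}^N \tau\,\|\p_x(u(t_n)-u_h^{\delta,n})\|_{L^2(I)}^2\bigr]$. Its temporal part is again exactly the discrete sum controlled by \eqref{e3.5}, so that piece is immediate. The obstacle is that \eqref{e2.8} and \eqref{eq3.5a} bound the gradients of the regularization and spatial errors only through the \emph{continuous} time integrals $\delta\,\mathbb{E}[\int_0^T\|\p_x(\cdot)(s)\|_{L^2(I)}^2\,ds]$, whereas the target demands the \emph{discrete} Riemann sum $\delta\sum_n\tau\|\p_x(\cdot)(t_n)\|_{L^2(I)}^2$ evaluated at the nodes. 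Because the paper stresses that an $H^1$-H\"older-in-time estimate for $u^\delta$ is unavailable under the gradient-type noise, one cannot simply appeal to continuity of the integrand to identify the Riemann sum with the integral. I would therefore close this gap either by re-running the energy identities underlying Theorems~\ref{thm2.3} and \ref{thm3.1} with the time integration restricted to each subinterval $[t_n,t_{n+1}]$ and then summed against the weights $\tau$, or by establishing that the nodal sum is dominated by the full integral up to the same powers of $\delta$. Once this reconciliation is in place, the remainder is a purely mechanical collection of the three prior estimates, with the constants combining exactly as displayed on the right-hand side of \eqref{e3.30}.
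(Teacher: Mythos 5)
Your proposal follows exactly the paper's route: the paper's entire proof of Theorem~\ref{thm3.3} is the single sentence that \eqref{e3.30} ``follows immediately'' from Theorems~\ref{thm2.3}, \ref{thm3.1} and \ref{thm3.2} and the triangle inequality, which is precisely your additive three-term decomposition, and for the $L^2$-supremum part and the temporal part of the gradient sum your argument and the paper's coincide.

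The ``delicate point'' you single out, however, is genuine, and the paper simply glosses over it. For the regularization piece of the nodal gradient sum the mismatch is actually harmless: by \eqref{e2.2} and \eqref{e2.6} one has $\sup_{t\in[0,T]}\mathbb{E}\bigl[\|\p_x (u^\delta - u)(t)\|_{L^2(I)}^2\bigr]\leq C$, so $\delta\sum_{n=0}^N \tau\, \mathbb{E}\bigl[\|\p_x(u^\delta-u)(t_n)\|_{L^2(I)}^2\bigr]\leq C\delta (T+\tau)$, which is absorbed into $CT\delta$ without invoking \eqref{e2.8} at all. The real obstruction is the semidiscrete piece $\delta\sum_{n}\tau\,\mathbb{E}\bigl[\|\p_x(u^\delta - u_h^\delta)(t_n)\|_{L^2(I)}^2\bigr]$: estimate \eqref{eq3.5a} controls only its time integral; the nodal values of $\p_x\eta^\delta = \p_x(u^\delta - R_h^r u^\delta)$ require $\sup_t \mathbb{E}\bigl[\|u^\delta(t)\|_{H^2(I)}^2\bigr]$, which is not available (only the $\delta$-weighted integral bound in \eqref{e2.6} is); the inverse inequality applied to \eqref{eq3.9} gives only $\mathbb{E}\bigl[\|\p_x\xi^\delta(t)\|_{L^2(I)}^2\bigr]\leq C(1+\delta^{-2})$, losing the factor $h^2$; and the paper explicitly states that an $H^1$-H\"older-in-time estimate for $u^\delta$ is unavailable, so the Riemann sum cannot be compared with the integral that way. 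Note also that your first suggested repair --- re-running the energy identities of Theorems~\ref{thm2.3} and \ref{thm3.1} on each subinterval --- would still yield integral rather than nodal control of the gradient, so it does not by itself close the gap; one would need either a pointwise-in-time bound for $\mathbb{E}\bigl[\|\p_x\xi^\delta(t)\|_{L^2(I)}^2\bigr]$ with the correct powers of $h$ and $\delta$, or to reinterpret the gradient term in \eqref{e3.30} using the continuous-time integral for the first two error components. In short: your proof is the paper's proof, and the caveat you raise is a real defect of the paper's one-line argument rather than of your proposal.
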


Inequality \eqref{e3.30} follows immediately from Theorems \ref{thm2.3}, 
\ref{thm3.1} and  \ref{thm3.2}, and an application of the triangle inequality.

\begin{remark}\label{addcoup}
We note that the main reason to have a restrictive 
coupling between numerical parameters in \eqref{e3.30} is due to the lack of
H\"older continuity (in time) estimate for $\p_x u_h^\delta$ in $L^2$-norm.  
On the other hand, it can be shown that, under a stronger regularity 
assumption, the estimate \eqref{e3.30} can be improved to
\begin{align}\label{e3.30a}
\sup_{0\leq n\leq N}  \mathbb{E}  \Bigl[\bigl\| u(t_n) 
&- u_h^{\delta,n} \bigr\|_{L^2(I)}^2 \Bigr]\\
&+ \delta \, \mathbb{E} \Bigl[\sum_{n=0}^N \tau \bigl\|\p_x u(t_n) 
-\p_x u_h^{\delta,n} \bigr\|_{L^2(I)}^2 \Bigr] \leq C\bigl( h^2 + \tau\bigr)\, .
\nonumber
\end{align}
This is because we no longer need to use the inverse inequality to 
get \eqref{star_1}--\eqref{star_3}, and \eqref{e3.30a} can be obtained by
starting with a control of the time discretization first. 
\end{remark}

\section{Numerical experiments} \label{sec-4}
In this section we shall first present some numerical experiments
to gauge the performance of the proposed fully discrete finite
element method and to examine the effect of the noise for long-time dynamics
of the stochastic MCF of planar graphs, and we then present a numerical
study of the stochastic MCF driven by both colored and 
space-time white noises where no theoretical result is known so
far in the literature. 

\subsection{Verifying the rate of convergence of time discretization} 
\label{sec-4.1}
To verify the rate of convergence of the time discretization obtained in 
Theorem \ref{thm3.3}, in this first test we use the following 
parameters $\epsilon=1$, $\delta=10^{-5}$, and $T=0.1$. 
In order to computationally generate a driving reference 
${\mathbb R}$-valued Wiener process, we use the smaller time
step $\tau=10^{-5}$. The initial condition is set to be $u_0(x)=\sin(\pi x)$.
To calculate the rate, we compute the solution $u_h^{\delta,n}$
for varying $\tau=0.0005, 0.001, 0.002, 0.004$. We take $500$ stochastic
samples at each time step $t_n$ in order to compute the expected values 
of the $L^{\infty}(L^2)$-norm of the error.
The computed errors along with the computed convergence rates
are exhibited in Table \ref{tab1}. The numerical results 
confirm the theoretical result of Theorem \ref{thm3.2}.

\begin{table}[htbp]
\begin{center}
\begin{tabular}{|c|c|c|}
\hline
  & Expected values of error & order of convergence\\ \hline
dt=0.004 & 0.41965657 & $-$ \\ \hline
dt=0.002 & 0.27206448 & 0.62526\\ \hline
dt=0.001 & 0.18136210 & 0.58508\\ \hline
dt=0.0005 & 0.12373884 & 0.55157\\ \hline
\end{tabular}
\medskip
\caption{Computed time discretization errors and convergence rates.}
\label{tab1}
\end{center}
\end{table}

\subsection{Dynamics of the stochastic MCF} \label{sec-4.3}
We shall perform several numerical tests to demonstrate 
the dynamics of the stochastic MCF with different magnitudes of noise 
(i.e., different sizes of the parameter $\epsilon$).

Figure \ref{fig5} shows the surface plots of the computed solution
$u_h^{\delta,n}$ at one stochastic sample over the space-time domains 
$(0,1)\times (0,0.1)$ (left) and $(0,1)\times (0, 2^8\times 10^{-5})$ 
(right) with the initial value $u_0(x)=\sin(\pi x)$ and the noise intensity
parameter $\epsilon=0.1$. The test shows that the solution converges to a steady 
state solution at the end.

\begin{figure}[htb]
\centerline{
\includegraphics[height=1.7in,width=4.0in]{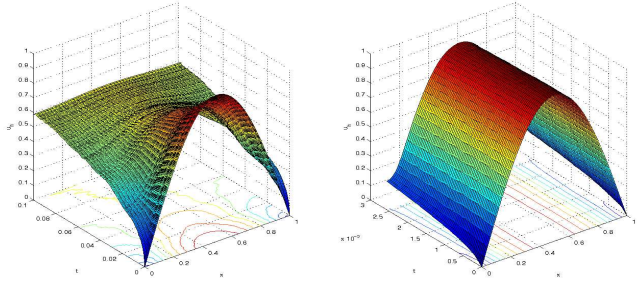}%
}
\figcaption{Surface plots of computed solution at a fixed stochastic
sample on the space time domains $(0,1)\times (0,0.1)$ (left) and 
$(0,1)\times (0, 2^8\times 10^{-5})$ (right). $u_0(x)=\sin(\pi x)$
and $\epsilon=0.1$.}\label{fig5}
\end{figure}

Figures \ref{fig6}--\ref{fig8} are the counterparts of 
Figure \ref{fig5} with noise intensity parameter $\epsilon=1, \sqrt{2}, 5$, 
respectively. We note that the error estimate of
Theorem \ref{thm3.3} does not apply to the latter case 
because the condition $\epsilon \leq \sqrt{2(1+ \delta)}$ is violated.
However, the computation result suggests that the 
stochastic MCF also converges to the steady state 
solution at the end although the paths to reach the steady 
state are different for different noise intensity parameter $\epsilon$.

\begin{figure}[htb]
\centerline{
\includegraphics[height=1.7in,width=4.0in]{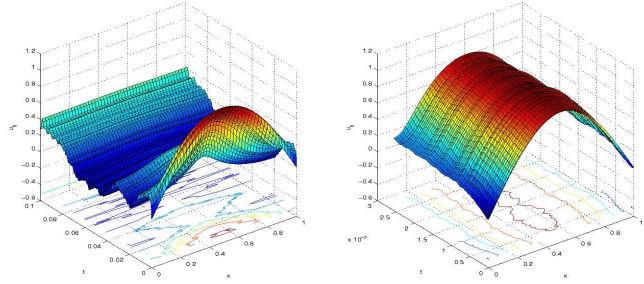}%
}
\figcaption{Surface plots of computed solution at a fixed stochastic
sample on the space time domains $(0,1)\times (0,0.1)$ (left) and
$(0,1)\times (0, 2^8\times 10^{-5})$ (right).  $u_0(x)=\sin(\pi x)$
and $\epsilon=1$.} \label{fig6}
\end{figure}

\begin{figure}[htb]
\centerline{
\includegraphics[height=1.7in,width=4.0in]{11}%
}
\figcaption{Surface plots of computed solution at a fixed stochastic
sample on the space time domains $(0,1)\times (0,0.1)$ (left) and
$(0,1)\times (0, 2^8\times 10^{-5})$ (right).  $u_0(x)=\sin(\pi x)$
and $\epsilon=\sqrt{2}$.} \label{fig7}
\end{figure}

\begin{figure}[htb]
\centerline{
\includegraphics[height=1.7in,width=4.0in]{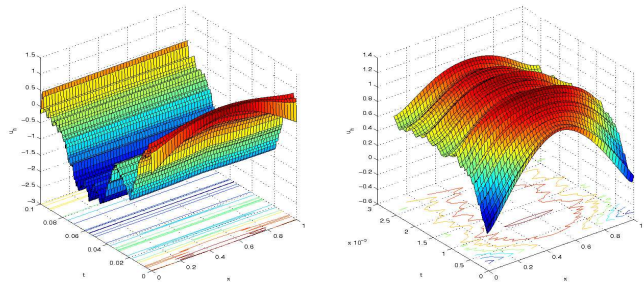}%
}
\figcaption{Surface plots of computed solution at a fixed stochastic
sample on the space time domains $(0,1)\times (0,0.1)$ (left) and
$(0,1)\times (0, 2^8\times 10^{-5})$ (right).  $u_0(x)=\sin(\pi x)$
and $\epsilon=5$.} \label{fig8}
\end{figure}

We then repeat the above four tests after replacing the smooth 
initial function $u_0$ by the following non-smooth initial function:

\begin{equation}\label{e4.2}
 u_0(x)=\begin{cases}
 10x, & \text{if}\ x\leq0.25, \\
 5-10x, & \text{if}\ 0.25<x\leq0.5, \\
 10x-5, & \text{if}\ 0.5<x\leq0.75, \\
 10-10x, & \text{if}\ 0.75<x\leq 1. \\
 \end{cases}
\end{equation}
The surface plots of the computed solutions are shown in Figures 
\ref{fig9}--\ref{fig12}, respectively.  Again, the numerical results
suggest that the solution of the stochastic MCF converges to the steady state
solution at the end although the paths to reach the steady state 
are different for different noise intensity parameter $\epsilon$.
As expected, the geometric evolution dominates for small $\epsilon$,
but the noise dominates the geometric evolution for large $\epsilon$. 

\begin{figure}[htb]
\centerline{
\includegraphics[height=1.7in,width=4.0in]{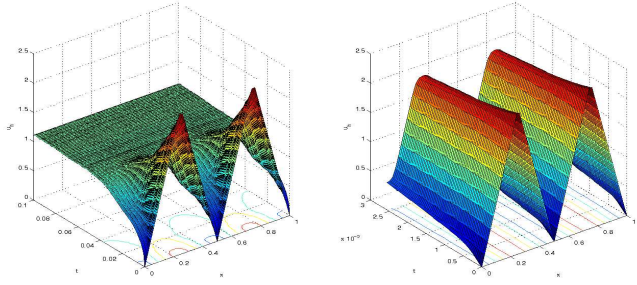}%
}
\figcaption{Surface plots of computed solution at a fixed stochastic
sample on the space time domains $(0,1)\times (0,0.1)$ (left) and
$(0,1)\times (0, 2^8\times 10^{-5})$ (right).  $u_0$ is given in
\eqref{e4.2} and $\epsilon=0.1$. }\label{fig9} 
\end{figure}
\begin{figure}[htb]
\centerline{
\includegraphics[height=1.7in,width=4.0in]{14}%
}
\figcaption{Surface plots of computed solution at a fixed stochastic
sample on the space time domains $(0,1)\times (0,0.1)$ (left) and
$(0,1)\times (0, 2^8\times 10^{-5})$ (right).  $u_0$ is given in
\eqref{e4.2} and $\epsilon=1$.}\label{fig10}
\end{figure}
\begin{figure}[htb]
\centerline{
\includegraphics[height=1.7in,width=4.0in]{15}%
}
\figcaption{Surface plots of computed solution at a fixed stochastic
sample on the space time domains $(0,1)\times (0,0.1)$ (left) and
$(0,1)\times (0, 2^8\times 10^{-5})$ (right).  $u_0$ is given in
\eqref{e4.2} and $\epsilon=\sqrt{2}$.} \label{fig11}
\end{figure}
\begin{figure}[htb]
\centerline{
\includegraphics[height=1.6in,width=4.0in]{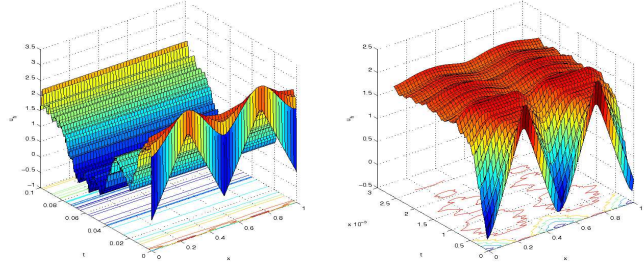}%
}
\figcaption{Surface plots of computed solution at a fixed stochastic
sample on the space time domains $(0,1)\times (0,0.1)$ (left) and
$(0,1)\times (0, 2^8\times 10^{-5})$ (right).  $u_0$ is given in
\eqref{e4.2} and $\epsilon=5$.} \label{fig12}
\end{figure}

\subsection{Verifying energy dissipation}\label{sec-4.5}
It follows from \eqref{e2.6} that the ``energy" 
$J(t):=\frac12 \mathbb{E}\bigl[\|\partial_x u^{\delta}(t)\|_{L^2(I)}^2\bigr]$
decreases monotonically  in time. In the following we verify this fact 
numerically. Again, we consider the case with the initial 
function $u_0(x)=\sin(\pi x)$ and the noise intensity parameter $\epsilon=1$. 
It is not hard to prove that $J(t)$ converges to zero as $t\to \infty$.
Figure \ref{fig13} plots the computed $J(t)$ as a function of $t$. 
The numerical result suggests that $J(t)$ does not change
anymore for $t\geq 0.1$.  

\begin{figure}[tbh]
\centerline{
\includegraphics[height=1.9in,width=2.5in]{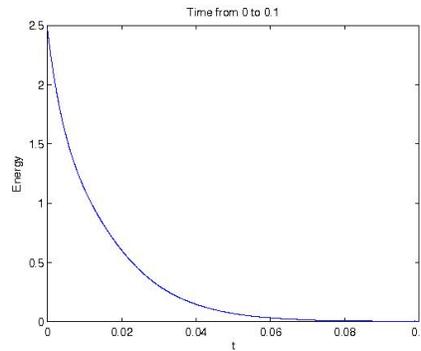}%
}
\figcaption{Decay of the energy $J(t)$ on the interval $(0, 0.1)$. } 
\label{fig13}
\end{figure}


\subsection{Thresholding for colored noise}\label{sec-4.1a}
In this subsection we present a computational study 
of the interplay of noise and geometric evolution in \eqref{e1.3}, 
which is beyond our theoretical results in section \ref{sec-3.1} and
\ref{sec-3.2}. For this purpose, we use driving colored noise 
represented by the $Q$-Wiener process ($J \in {\mathbb N}$)
\begin{equation}\label{wn1}
W_t = \sum_{j=1}^{J} q_j^{\frac12} \beta_j(t) e_j\,,
\end{equation}
where $\{ \beta_j(t);\, t \geq 0\}_{j\geq 1}$  denotes a family of real-valued
independent Wiener processes on $\bigl( \Omega, {\mathcal F},
{\mathbb F}, {\mathbb P}\bigr)$, and $\{ (q_j, e_j)\}_{j=1}^{J}$ is an 
eigen-system of the symmetric, non-negative trace-class operator 
$Q: L^2(I) \rightarrow L^2(I)$, with $e_j = \sqrt{2} \sin(j \pi x)$. 
In particular, we like to numerically address the following questions:
\begin{enumerate}
\item[(A)] {\em Thresholding}: By Theorem~\ref{thm2.1}, strong solutions 
of \eqref{e1.3} exist for $\varepsilon \leq \sqrt{2}$, and a similar 
result can be shown for the PDE problem with the noise \eqref{wn1}. What are 
admissible intensities of the noise suggested by computations?  
Moreover, what do the computations suggest about the stochastic MCF
in the case of spatially white noise (i.e., $q_j\equiv 1, J=\infty$)
where no theoretical result is available so far?  
\item[(B)] {\em General initial profiles}: The deterministic evolution of 
Lipschitz initial graphs is well-understood. For example, the (upper) graph 
of two touching spheres may trigger non-uniqueness. What are the 
regularization and the noise excitation effects in the case
of the initial data with infinite energy and using different noises? 
\end{enumerate}

Recall that the estimate in Proposition~\ref{prop3.3} for $V_r^h$-valued 
solution $u^{\delta,n}_h$ suggests that $\varepsilon >0$ ought be
sufficiently small to ensure the existence. In our test, we 
employ the colored noise \eqref{wn1} with $q_j^{\frac12} = j^{-0.6}$, $J = 20$,
and the following non-Lipschitz initial data:
\begin{equation}\label{app1} 
u_0(x) = |0.5 - x |^\kappa \qquad \forall\, x \in (0,1)\,,
\end{equation}
where $\kappa = 0.1$. In addition, we set $(\tau, h) = (0.01, 0.02)$
and $T = \frac12$. Figure~\ref{figu1} shows the single trajectory
of the stochastic MCF plotted as graphs over the space-time domain with, 
respectively, $\epsilon=0.1, 0.5, \sqrt{2}$. The results indicate thresholding, 
namely, the trajectories grow rapidly in time for sufficiently large
values $\varepsilon$, and the noise effect dominates the geometric evolution.
\begin{figure}[htb]
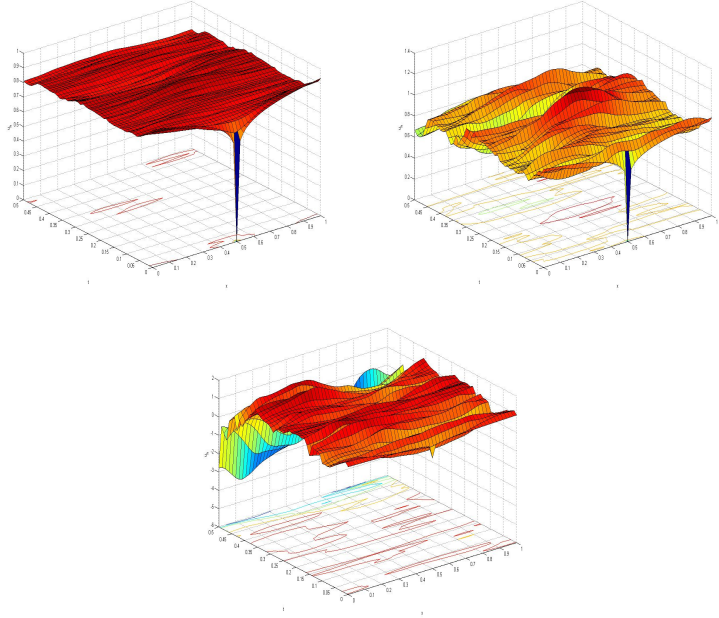

\begin{center}
\includegraphics[height=1.7in,width=2.0in]{v-test4}
\includegraphics[height=1.7in,width=2.0in]{v-test7}\\
\includegraphics[height=1.7in,width=2.0in]{v-test10}
\caption{Thresholding for colored noise: Trajectories for
$\varepsilon = 0.1$ (top left), 
$\varepsilon = 0.5$ (top right), $\varepsilon = \sqrt{2}$ (bottom).} \label{figu1}
\end{center}
\end{figure}
The excitation effect of the noise on the geometric evolution is illustrated 
by corresponding plots for the evolution of the functional 
$n \mapsto \Vert \p_x u_h^{\delta,n}(\omega)\Vert^2_{L^2}$ vs its expectation 
$n \mapsto {\mathbb E}\bigl[ \Vert \p_x u_h^{\delta,n}\Vert^2_{L^2}\bigr]$
in Figure~\ref{figu1a} and \ref{figu1b}. We observe that the geometric 
evolution dominates for small values of $\varepsilon$, while the noise 
evolution takes over for large values of $\varepsilon$.
\begin{figure}[htb]
\begin{center}
\includegraphics[height=3.0in,width=4.0in]{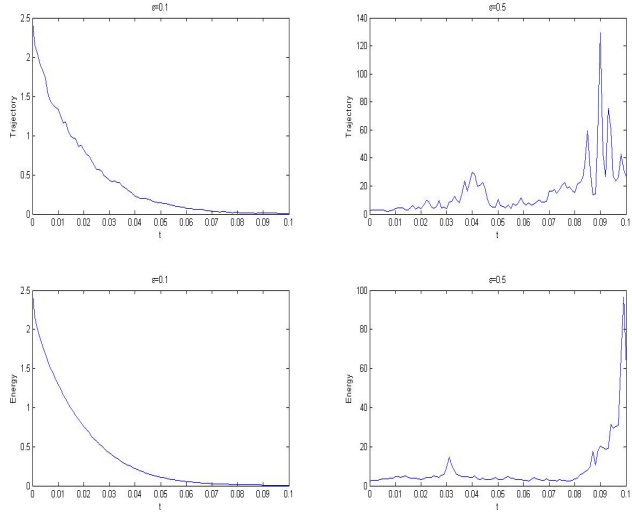}
\caption{Geometric evolution vs colored noise evolution ($q^{\frac12}_j 
= j^{-0.6}$, $J=20$): 1st row: single trajectory for 
$n \mapsto \Vert \p_x u_h^{\delta,n}(\omega) \Vert^2_{L^2}$ 
and $\varepsilon = 0.1$ (left), $\varepsilon = 0.5$ (right); 2nd 
row: $n \mapsto {\mathbb E}\bigl[ \Vert \p_x u_h^{\delta,n}\Vert^2_{L^2}\bigr]$ 
for  $\varepsilon = 0.1$ (left), $\varepsilon = 0.5$ (right).} \label{figu1a}
\end{center}
\end{figure}

\begin{figure}[htb]
\begin{center}
\includegraphics[height=3.0in,width=4.0in]{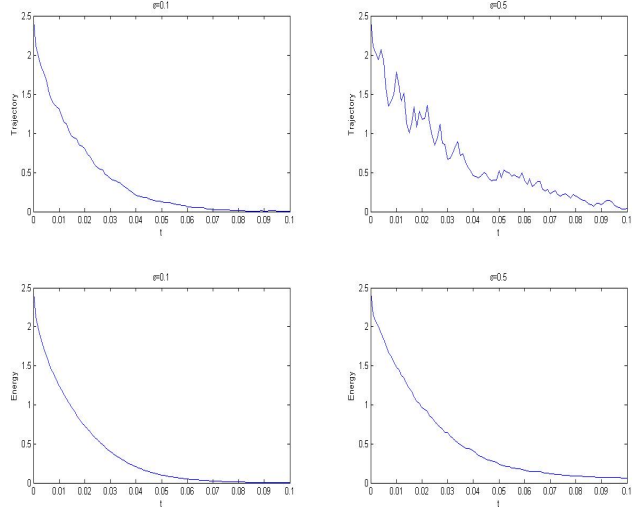}
\caption{Geometric evolution vs colored noise evolution
($q^{\frac12}_j = j^{-1}$, $J=50$): 1st row: single trajectory for
$n \mapsto \Vert \p_x u_h^{\delta,n}(\omega) \Vert^2_{L^2}$ 
and $\varepsilon = 0.1$ (left), $\varepsilon = 0.5$ (right); 2nd row:
$n \mapsto {\mathbb E}\bigl[ \Vert \p_x u_h^{\delta,n}\Vert^2_{L^2}\bigr]$ 
for  $\varepsilon = 0.1$ (left), $\varepsilon = 0.5$ (right).} \label{figu1b}
\end{center}
\end{figure}

\subsection{Thresholding for white noise}\label{sec-4.1b}
We now consider the case of white noise in \eqref{e3.1}--\eqref{e3.1a}, 
that is, $q_j \equiv 1$ in \eqref{wn1} and $J = \infty$, for which the 
solvability of \eqref{e1.2a} is not known. 
Figure~\ref{figu2} shows the single trajectory of the stochastic MCF 
(with the same data as in section~\ref{sec-4.1a}) 
plotted as graphs over the space-time domain with, respectively,
$\epsilon=0.1, 0.5, \sqrt{2}$.  We observe a very rapid
growth of trajectories (numerical values range between $10^{14}$ and
$10^{21}$) even for small values of $\varepsilon >0$. 
These numerical results suggest either a rapid growth or a finite time 
explosion for the stochastic MCF in the case of white noise.

\begin{figure}[htb]
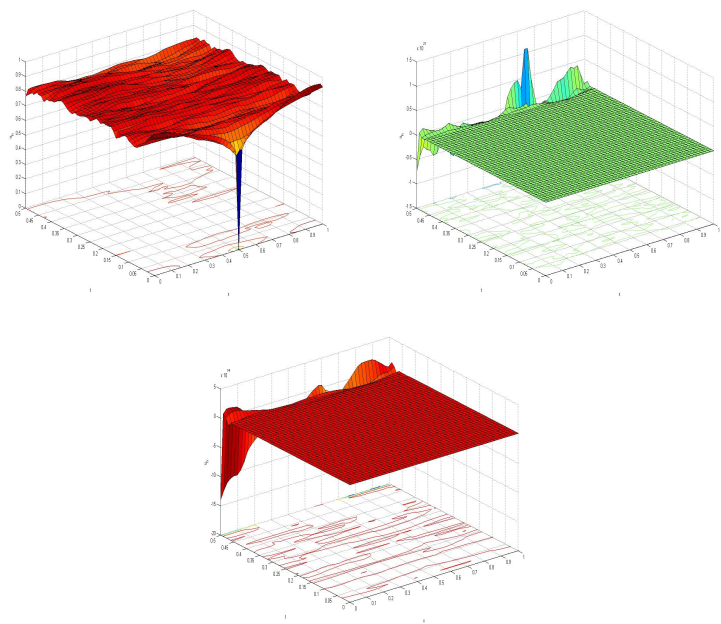

\begin{center}
\includegraphics[height=1.7in,width=2.0in]{v-test13}
\includegraphics[height=1.7in,width=2.0in]{v-test16} \\
\includegraphics[height=1.7in,width=2.0in]{v-test19}
\caption{Thresholding and white noise: $\varepsilon = 0.1$ (top left), 
$\varepsilon = 0.5$ (top right) $\varepsilon = \sqrt{2}$ (bottom).} \label{figu2}
\end{center}
\end{figure}


\end{document}